\documentclass{amsart}

\usepackage[utf8]{inputenc}
\usepackage[T1]{fontenc}
\usepackage{amsmath,amsthm,amsfonts,amssymb}
\usepackage{microtype}
\usepackage[hidelinks]{hyperref}
\newtheorem{theorem}{Theorem}[section]
\newtheorem{lemma}[theorem]{Lemma}
\newtheorem{proposition}[theorem]{Proposition}
\newtheorem{corollary}[theorem]{Corollary}
\numberwithin{equation}{section}
\newtheorem{question}[theorem]{Question}
\theoremstyle{remark}

\theoremstyle{plain}

\begin{document}

\title[The lower envelope of ultraspherical polynomials]
{The lower envelope of ultraspherical polynomials}
\author{K. Castillo}
\address{CMUC, Department of Mathematics, University of Coimbra,
3000-143 Coimbra, Portugal}
\email{kenier@mat.uc.pt}
\author{S. Sadigova}
\address{Institute of Mathematics, The Ministry of Science and Education,
Baku, Azerbaijan}
\email{s\_sadigova@mail.ru}
\hypersetup{pdfauthor={K. Castillo and S. Sadigova}}
\subjclass[2020]{33C45, 42C05}
\keywords{Jacobi polynomials, ultraspherical polynomials, lower envelope,
convex combinations, extreme zeros}
\date{September 5, 2026}

\begin{abstract}
Let $R_n^{(a,b)}=P_n^{(a,b)}/P_n^{(a,b)}(1)$ be the normalised Jacobi
polynomials. For $\alpha\ge0$, put $\xi_0=-1$ and let $\xi_n$ be the
largest zero of $R_n^{(\alpha+1,\alpha)}$ for $n\ge1$. We prove that
\[
R_k^{(\alpha,\alpha)}(x)
=\min_{n\ge0}R_n^{(\alpha,\alpha)}(x),
\quad \xi_{k-1}\le x\le\xi_k,
\]
for every $k\ge1$. This determines the pointwise lower envelope of the
normalised ultraspherical polynomials and answers Question~2 posed by
F.~M. de Oliveira Filho in his 2009 thesis. For $\alpha>0$ and $-1<x<1$,
we also determine all minimising degrees. The sequence is non-increasing
up to each such degree, which answers his Question~1 in this range.
A Legendre example gives a negative answer when $\alpha=0$.
\end{abstract}

\maketitle

\section{Introduction}\label{intro}

For $a,b>-1$, write
\begin{equation}\label{eq:jacobi-normalisation}
R_n^{(a,b)}(x)=\frac{P_n^{(a,b)}(x)}{P_n^{(a,b)}(1)},
\quad n\ge0,
\end{equation}
where $P_n^{(a,b)}$ is the Jacobi polynomial of degree $n$.
We determine the pointwise lower envelope of
$(R_n^{(\alpha,\alpha)})_{n\ge0}$ on $[-1,1]$ for $\alpha\ge0$.
The description uses consecutive intervals determined by the largest
zeros of a neighbouring Jacobi family.

Such minima enter bounds for measurable chromatic numbers derived through
harmonic analysis. Bachoc, Nebe, de Oliveira Filho, and Vallentin expressed their
extension of the Lov\'asz theta function for spherical distance graphs in
terms of a minimum of normalised ultraspherical polynomials
\cite[Theorem~6.2]{BachocNebeOliveiraVallentin2009}.

Fix $\alpha\ge0$. Put $\xi_0=-1$, and let $\xi_n$ be the largest zero
of $R_n^{(\alpha+1,\alpha)}$ for $n\ge1$. Interlacing and the density
of Jacobi zeros give
\begin{equation}\label{eq:envelope-breakpoints}
-1=\xi_0<\xi_1<\xi_2<\cdots<1,
\quad \lim_{n\to\infty}\xi_n=1;
\end{equation}
see \cite[Theorems~3.3.1, 3.3.2, and~6.1.1]{Szego1975}.
In Question~2 of his thesis, de Oliveira Filho asked whether degree $k$
attains the minimum throughout $[\xi_{k-1},\xi_k]$
\cite[Section~3.5d, p.~47]{OliveiraFilho2009}.
Here we have shifted the index in his formulation by one.
Our main result gives an affirmative answer.

\begin{theorem}\label{thm:main}
Let $\alpha\ge0$ and $k\ge1$. For every
$x\in[\xi_{k-1},\xi_k]$,
\begin{equation}\label{eq:main-envelope}
R_k^{(\alpha,\alpha)}(x)
=\min_{n\ge0}R_n^{(\alpha,\alpha)}(x).
\end{equation}
\end{theorem}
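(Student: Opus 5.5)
The plan is to reduce everything to a sign statement about the neighbouring family $R_n^{(\alpha+1,\alpha)}$. The basic tool is the classical contiguous relation
\[
R_n^{(\alpha,\alpha)}(x)-R_{n+1}^{(\alpha,\alpha)}(x)=c_n\,(1-x)\,R_n^{(\alpha+1,\alpha)}(x),\qquad c_n>0.
\]
This follows from the formula $(2n+a+b+2)(1-x)P_n^{(a+1,b)}=2(n+a+1)P_n^{(a,b)}-2(n+1)P_{n+1}^{(a,b)}$ after normalising at $x=1$. I will first verify the positivity of $c_n$ explicitly; it should be $c_n=(2n+2\alpha+2)/(2(\alpha+1))$ or similar.

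Since $R_n^{(\alpha+1,\alpha)}$ is positive to the right of its largest zero $\xi_n$, the sequence $n\mapsto R_n^{(\alpha,\alpha)}(x)$ decreases at step $n$ whenever $x\ge\xi_n$. Now fix $x\in[\xi_{k-1},\xi_k]$. By \eqref{eq:envelope-breakpoints}, $x\ge\xi_n$ for all $n\le k-1$. This gives
\[
R_0^{(\alpha,\alpha)}(x)\ge R_1^{(\alpha,\alpha)}(x)\ge\cdots\ge R_k^{(\alpha,\alpha)}(x),
\]
which settles all degrees $n\le k$.

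It remains to show $R_n^{(\alpha,\alpha)}(x)\ge R_k^{(\alpha,\alpha)}(x)$ for $n>k$, where $x\le\xi_k<\xi_n$ and the sign of the increments is no longer controlled. I would telescope,
\[
R_n^{(\alpha,\alpha)}(x)-R_k^{(\alpha,\alpha)}(x)=-(1-x)\sum_{j=k}^{n-1}c_jR_j^{(\alpha+1,\alpha)}(x),
\]
and try to identify the sum via Christoffel--Darboux for the weight $(1-x)^{\alpha+1}(1+x)^\alpha$. The aim is to write it as a kernel $K(x,1)$ difference, which is a two-term expression in $R_n^{(\alpha+1,\alpha)}$, $R_{n-1}^{(\alpha+1,\alpha)}$ and the corresponding $k$-terms. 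This turns the inequality into comparing two short expressions.

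The second ingredient is a Sonine-type envelope bound. For $\alpha\ge0$, the function
\[
f_n(x)=R_n^{(\alpha,\alpha)}(x)^2+\frac{(1-x^2)\,R_n^{(\alpha,\alpha)\prime}(x)^2}{n(n+2\alpha+1)}
\]
is non-decreasing in $|x|$. Hence the negative values of $R_n^{(\alpha,\alpha)}$ to the left of $\xi_n$ are bounded below by $-\sqrt{f_n(x)}$. I would compare $f_n(x)$ with $R_k^{(\alpha,\alpha)}(x)^2$, using that $R_k^{(\alpha,\alpha)}(x)\le R_k^{(\alpha,\alpha)}(\xi_k)=R_{k+1}^{(\alpha,\alpha)}(\xi_k)$ on the interval. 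The point is that $R_k$ is negative and fairly large in modulus there, because it sits near its first minimum to the left of its largest zero.

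Concretely, I would run an induction on $n\ge k+1$. Assume the claim holds for $n$. If $x\ge\xi_n$ it passes to $n+1$ directly; if $x<\xi_n$ I use the envelope estimate. The main obstacle is this comparison for $n>k$ on $[\xi_{k-1},\xi_k]$, that is, showing that no higher-degree polynomial dips below $R_k$ there. I expect the Christoffel--Darboux identification to be the cleanest route. If it fails, I would fall back on the Sonine monotonicity combined with Markov-type monotonicity of the extreme zeros and extrema in $n$.
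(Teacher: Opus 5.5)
You handle the degrees $n\le k$ correctly, and the same way the paper does. With $U_n=R_n^{(\alpha,\alpha)}$, $Q_n=R_n^{(\alpha+1,\alpha)}$ and $c_n=(n+\alpha+1)/(\alpha+1)$, the contiguous relation together with $Q_n(x)\ge0$ for $x\ge\xi_n$ gives $U_0(x)\ge\cdots\ge U_k(x)$. The range $n>k$ is the whole content of the theorem, and there none of your three ingredients works.

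\emph{The Christoffel--Darboux step.} It is misidentified and would not help anyway. The kernel $K(x,1)$ for the weight $(1-x)^{\alpha+1}(1+x)^\alpha$ is $\sum_j P_j^{(\alpha+1,\alpha)}(1)^2h_j^{-1}Q_j(x)$, with $h_j$ the squared norm. Its coefficients grow like $j^{2\alpha+3}$, whereas your $c_j$ grow like $j$. In any case your telescoped sum is just $(U_k(x)-U_n(x))/(1-x)$. Writing $U_n=a_nQ_n-b_nQ_{n-1}$ makes it a two-term expression, but comparing the two expressions is the original inequality restated. The signs of $Q_n(x)$ and $Q_{n-1}(x)$ are not controlled for $n>k$.

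\emph{The induction.} It is vacuous. For $n\ge k+1$ and $x\le\xi_k<\xi_n$, the branch $x\ge\xi_n$ never occurs. Even if it did, $Q_n(x)\ge0$ gives $U_{n+1}(x)\le U_n(x)$, which is the wrong direction for propagating a lower bound. So everything rests on the comparison $f_n(x)\le U_k(x)^2$.

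\emph{The Sonine comparison.} Sonine's monotonicity is true, but the comparison you need from it is false in general. The function $f_n$ bounds $|U_n|$, not $-U_n$, and it exceeds $U_n^2$ wherever $U_n'(x)\ne0$. It therefore cannot certify the inequality where it is tight, or even where $U_n$ is positive. Concretely, take $\alpha=0$, $k=1$, $x=\xi_1=-1/3$ and $n=5$. Here $P_5(-1/3)=-1/3=P_1(-1/3)$, a genuine tie, but $P_5'(-1/3)=-5/9$. Hence $f_5(-1/3)=\tfrac19+\tfrac89\cdot\tfrac{25}{81}\cdot\tfrac1{30}=\tfrac{263}{2187}>\tfrac19$. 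The envelope then gives only $P_5(-1/3)\ge-\sqrt{263/2187}$, which is below $-1/3$. By continuity the failure persists for small $\alpha>0$. Similarly, at $\alpha=0$, $k=2$, $x=\xi_2=1/(1+\sqrt6)$, you have $P_4(x)\approx0.091>0$, yet $f_4(x)\approx0.148>P_2(x)^2\approx0.140$. Your fallback to Markov-type monotonicity of zeros and extrema in $n$ does not compare $U_n$ and $U_k$ at the same point $x$, which is what is required.

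The missing idea is the paper's representation through a fixed pair with known signs. For $m\ge k$,
\[
U_m=a_kW^{(k,\lambda)}_{m-k}Q_k-b_kW^{(k+1,\lambda)}_{m-k-1}Q_{k-1},
\]
where $a_k,b_k>0$ and the $W$ are associated ultraspherical polynomials normalised by $W(1)=1$. On $[\xi_{k-1},\xi_k]$ one has $Q_k\le0\le Q_{k-1}$, so $U_m\ge U_k$ reduces to the one-sided bounds $W\le1$. Proving those bounds takes real work:
\begin{itemize}
\item a positive connection to $\lambda=1/2$;
\item a rotation/energy estimate for angles $\theta\le2\pi/K$;
\item the largest-zero bound $\arccos\xi_{k-1}\le4\pi\lambda/(k+1)$ for $k\ge3$;
\item a separate integral-representation and low-degree analysis for $k=1,2$.
\end{itemize}
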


The closed intervals include ties: at $x=\xi_k$, both degrees $k$ and
$k+1$ attain the minimum, for every $k\ge1$. Additional minimising
degrees can occur, as discussed in Section~\ref{sec:minimising-indices}.
Together with \eqref{eq:envelope-breakpoints}, the theorem covers every
$x\in[-1,1)$; at $x=1$, all the normalised polynomials equal one.

The minimum comparison was known when $x$ is the largest zero of
$P_{k-1}^{(\alpha+1,\alpha+1)}$, for $k\ge2$
\cite[Proposition~7.1]{BachocNebeOliveiraVallentin2009}.
De Oliveira Filho also proved that, at that point, degree $k$ is the
unique global minimiser and the sequence of values decreases strictly
up to degree $k$ \cite[Theorem~3.8 and its proof]{OliveiraFilho2009}.
Koornwinder recalled this case in his discussion of the related
Question~1 \cite[Topic~11, Remark~2, p.~21]{Koornwinder2010}.
Theorem~\ref{thm:main} establishes the minimum comparison on the full
interval $[\xi_{k-1},\xi_k]$.

The theorem also implies that, at each fixed $-1<x<1$, the sequence
$(R_n^{(\alpha,\alpha)}(x))_{n\ge0}$ is non-increasing up to its least
minimising degree. We prove this consequence as
Corollary~\ref{cor:least-minimiser} in Section~\ref{sec:minimising-indices}.
Question~1 of de Oliveira Filho, reproduced by Koornwinder
\cite[Topic~11, p.~21]{Koornwinder2010}, asks whether the same conclusion
holds for an arbitrary minimising degree. A Legendre example gives a
negative answer when $\alpha=0$. For $\alpha>0$,
Proposition~\ref{prop:positive-minimisers} shows that the minimum is
unique between successive breakpoints and is attained at exactly two
consecutive degrees at each breakpoint. This gives an affirmative answer
to Question~1 for positive parameters. Here ``decreasing'' is understood
as non-increasing.

The proof begins in Section~\ref{sec:auxiliary} with a representation
of the later ultraspherical polynomials in terms of two auxiliary
recurrence solutions. A positive connection formula reduces the required
upper bounds for these solutions to the parameter $\lambda=1/2$, where
$\lambda=\alpha+1/2$. Section~\ref{sec:auxiliary-bound} proves the bound
by an estimate for a quadratic form. An estimate for the largest Jacobi zero
then yields the theorem for $k\ge3$ in Section~\ref{sec:largest-zero}.
Section~\ref{sec:first-intervals} treats the first two intervals and
completes the proof. In Section~\ref{sec:minimising-indices}, we determine
the equality cases for $\alpha>0$ and settle Question~1.

\section{Auxiliary solutions and positive connections}\label{sec:auxiliary}

Throughout the proof, set $\lambda=\alpha+1/2$ and write
$U_n=R_n^{(\alpha,\alpha)}$ and
$Q_n=R_n^{(\alpha+1,\alpha)}$ for $n\ge0$.
We repeatedly use the contiguous relations
\begin{subequations}\label{eq:contiguous-relations}
\begin{align}
(n+\alpha+1)(1-x)Q_n(x)
&=(\alpha+1)\bigl(U_n(x)-U_{n+1}(x)\bigr),
\label{eq:contiguous-difference}\\[7pt]
U_n(x)
&=\frac{(n+\alpha+1)(n+2\alpha+1)Q_n(x)
-n(n+\alpha)Q_{n-1}(x)}
{(\alpha+1)(2n+2\alpha+1)}.
\label{eq:contiguous-representation}
\end{align}
\end{subequations}
Identity \eqref{eq:contiguous-difference} holds for $n\ge0$, and
\eqref{eq:contiguous-representation} holds for $n\ge1$.
After normalisation, the first follows from \cite[(4.5.4)]{Szego1975};
the second follows from Jacobi symmetry and
\cite[Table~18.6.1 and (18.9.5)]{DLMF}.
We also use the normalised ultraspherical
recurrence
\begin{equation}\label{eq:ultraspherical-recurrence}
(n+2\lambda)U_{n+1}(x)
=2(n+\lambda)xU_n(x)-nU_{n-1}(x).
\end{equation}
Here $n\ge1$, $U_0(x)=1$, and $U_1(x)=x$.
This is \cite[(4.7.17)]{Szego1975} after normalisation.

\subsection{A representation for higher degrees}

For $c>0$, define $W_r=W_r^{(c,\lambda)}$ by
\begin{equation}\label{eq:W-recurrence}\begin{aligned}
W_{-1}(x)&=W_0(x)=1,\\[7pt]
(c+r+2\lambda)W_{r+1}(x)
&=2(c+r+\lambda)xW_r(x)-(c+r)W_{r-1}(x).
\end{aligned}\end{equation}
The recurrence is used for $r\ge0$.
In particular, $W_r^{(c,\lambda)}(1)=1$ for every $r\ge-1$.

\begin{lemma}\label{lem:representation}
Fix a real number $x$.  For $m\ge k\ge1$, put
\begin{equation}\label{eq:representation-coefficients}\begin{aligned}
a_k&=\frac{(k+\alpha+1)(k+2\alpha+1)}
{(\alpha+1)(2k+2\alpha+1)},\\[7pt]
b_k&=\frac{k(k+\alpha)}
{(\alpha+1)(2k+2\alpha+1)}.
\end{aligned}\end{equation}
Then $a_k,b_k>0$ and
\begin{equation}\label{eq:representation}
U_m(x)=a_kW_{m-k}^{(k,\lambda)}(x)Q_k(x)
-b_kW_{m-k-1}^{(k+1,\lambda)}(x)Q_{k-1}(x).
\end{equation}
\end{lemma}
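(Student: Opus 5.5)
Positivity of $a_k,b_k$ is immediate: for $\alpha\ge0$ and $k\ge1$ every factor in the numerators and denominators of \eqref{eq:representation-coefficients} is positive. For the identity \eqref{eq:representation}, we fix $k\ge1$ and $x$, and show that both sides, viewed as sequences in $m\ge k$, satisfy the same three-term recurrence with matching initial values. The recurrence for the left side is \eqref{eq:ultraspherical-recurrence} with $n=m$, namely
\[
(m+2\lambda)U_{m+1}=2(m+\lambda)xU_m-mU_{m-1}.
\]
If we put $r=m-k$ in \eqref{eq:W-recurrence} with $c=k$, the coefficients become $(k+r+2\lambda)$, $2(k+r+\lambda)$, $(k+r)$, which agree with those above. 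With $c=k+1$ and index $r=m-k-1$, the coefficients $(k+1+r+2\lambda)$, $2(k+1+r+\lambda)$, $(k+1+r)$ also agree. Hence $m\mapsto W_{m-k}^{(k,\lambda)}(x)$ satisfies the ultraspherical recurrence for $m\ge k$, and $m\mapsto W_{m-k-1}^{(k+1,\lambda)}(x)$ satisfies it for $m\ge k+1$. The right side of \eqref{eq:representation} is a linear combination of these two with coefficients independent of $m$, so it satisfies the recurrence for $m\ge k+1$. Both sides therefore agree for all $m\ge k$ once they agree at $m=k$ and $m=k+1$.

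For the initial values, at $m=k$ we have $W_0^{(k,\lambda)}=W_{-1}^{(k+1,\lambda)}=1$, and the right side reduces to $a_kQ_k-b_kQ_{k-1}$. This equals $U_k$ by \eqref{eq:contiguous-representation} with $n=k$, whose coefficients are exactly $a_k$ and $b_k$. At $m=k+1$ we need $W_1^{(k,\lambda)}$ and $W_0^{(k+1,\lambda)}=1$. The recurrence \eqref{eq:W-recurrence} with $r=0$, $c=k$ gives
\[
(k+2\lambda)W_1^{(k,\lambda)}=2(k+\lambda)x-k .
\]
The target identity is then
\[
(k+2\lambda)U_{k+1}=\bigl(2(k+\lambda)x-k\bigr)a_kQ_k-(k+2\lambda)b_kQ_{k-1}.
\]

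This is the only nontrivial step, and I expect it to be the main obstacle. My plan is to apply \eqref{eq:ultraspherical-recurrence} at $n=k$:
\[
(k+2\lambda)U_{k+1}=2(k+\lambda)xU_k-kU_{k-1}.
\]
We substitute $U_k=a_kQ_k-b_kQ_{k-1}$. For $U_{k-1}$, we use \eqref{eq:contiguous-difference} with $n=k-1$, which gives
\[
U_{k-1}=U_k+\frac{(k+\alpha)(1-x)}{\alpha+1}Q_{k-1}.
\]
After substitution, the $Q_k$ coefficient is $(2(k+\lambda)x-k)a_k$, as required. For $Q_{k-1}$ it remains to verify
\[
-2(k+\lambda)xb_k+kb_k-\frac{k(k+\alpha)(1-x)}{\alpha+1}=-(k+2\lambda)b_k .
\]
Using $2(k+\lambda)=2k+2\alpha+1$, we have $2(k+\lambda)b_k=k(k+\alpha)/(\alpha+1)$, so the terms in $x$ cancel. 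The constant terms reduce to $(2k+2\lambda)b_k=k(k+\alpha)/(\alpha+1)$, which is the same identity.

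(For $k=1$ the term $U_{k-1}=U_0$ is covered by \eqref{eq:contiguous-difference} with $n=0$, so no special case arises.)
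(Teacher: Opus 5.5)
Your proof is correct and follows the paper's argument: verify $m=k$ via \eqref{eq:contiguous-representation}, verify $m=k+1$, and then propagate with the common three-term recurrence \eqref{eq:ultraspherical-recurrence}. The only difference is the $m=k+1$ check: the paper writes $W_1^{(k,\lambda)}(x)-1=-\frac{2k+2\alpha+1}{k+2\alpha+1}(1-x)$ and applies \eqref{eq:contiguous-difference} at $n=k$, which is slightly shorter than your route through \eqref{eq:ultraspherical-recurrence} at $n=k$ and \eqref{eq:contiguous-difference} at $n=k-1$.
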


\begin{proof}
At $m=k$, \eqref{eq:representation} is exactly
\eqref{eq:contiguous-representation}. Also, \eqref{eq:W-recurrence} gives
\begin{equation}\label{eq:W-first-difference}
W_1^{(k,\lambda)}(x)-1
=-\frac{2k+2\alpha+1}{k+2\alpha+1}(1-x).
\end{equation}
Combining \eqref{eq:W-first-difference} with
\eqref{eq:contiguous-difference} proves \eqref{eq:representation}
at $m=k+1$. Both sides then satisfy
\eqref{eq:ultraspherical-recurrence} in $m$, so the identity holds for every
$m\ge k$.
\end{proof}

If $x\in[\xi_{k-1},\xi_k]$, interlacing
\cite[Theorem~3.3.2]{Szego1975} gives
\begin{equation}\label{eq:interval-signs}
Q_j(x)\ge0,\quad 0\le j\le k-1,\quad Q_k(x)\le0.
\end{equation}
Indeed, $x$ is to the right of the largest zero of $Q_j$ for $j<k$.
For $k\ge2$, the number $\xi_{k-1}$ lies between the two largest zeros
of $Q_k$; for $k=1$, the sign of $Q_1$ on $[-1,\xi_1]$ is immediate.
Relation \eqref{eq:contiguous-difference} and \eqref{eq:interval-signs} already give
$U_0(x)\ge\cdots\ge U_k(x)\le U_{k+1}(x)$.
Subtracting the case $m=k$ from \eqref{eq:representation} gives
\begin{equation}\label{eq:representation-difference}\begin{aligned}
U_m(x)-U_k(x)={}&a_k\bigl(W_{m-k}^{(k,\lambda)}(x)-1\bigr)Q_k(x)\\[7pt]
&-b_k\bigl(W_{m-k-1}^{(k+1,\lambda)}(x)-1\bigr)Q_{k-1}(x).
\end{aligned}\end{equation}
By \eqref{eq:interval-signs} and \eqref{eq:representation-difference},
it is enough to prove
$W_{m-k}^{(k,\lambda)}(x)\le1$ and
$W_{m-k-1}^{(k+1,\lambda)}(x)\le1$ to obtain $U_m(x)\ge U_k(x)$.

\subsection{Positive connection coefficients}

\begin{proposition}\label{prop:positive-connection}
Let $\lambda\ge1/2$, $c>0$, and $K=c/(2\lambda)$. For every $N\ge0$,
there are numbers $d_{N,j}\ge0$, $0\le j\le N$, such that
\begin{equation}\label{eq:positive-connection}
W_N^{(c,\lambda)}(x)
=\sum_{j=0}^N d_{N,j}W_j^{(K,1/2)}(x),
\quad
\sum_{j=0}^N d_{N,j}=1.
\end{equation}
\end{proposition}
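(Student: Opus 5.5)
We would prove Proposition~\ref{prop:positive-connection} by comparing the three-term recurrences of the two families and invoking a positivity criterion for connection coefficients of Szwarc type (R.~Szwarc, \emph{Connection coefficients of orthogonal polynomials}, 1992). Such a criterion is not among the earlier results of the paper, so it has to be cited or reproved. Put $p_r=W_r^{(c,\lambda)}$ and $q_j=W_j^{(K,1/2)}$. Dividing \eqref{eq:W-recurrence} by $2(c+r+\lambda)$ gives, for $r\ge0$,
\[
xp_r=\gamma_rp_{r+1}+\alpha_rp_{r-1},\quad
\gamma_r=\frac{c+r+2\lambda}{2(c+r+\lambda)},\quad
\alpha_r=\frac{c+r}{2(c+r+\lambda)},
\]
and similarly $xq_j=\gamma'_jq_{j+1}+\alpha'_jq_{j-1}$, where
\[
\gamma'_j=\frac{K+j+1}{2K+2j+1},\qquad
\alpha'_j=\frac{K+j}{2K+2j+1}.
\]
In both cases $\gamma+\alpha=1$, so multiplication by $x$ acts as a Markov kernel on the basis. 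Since $p_N$ has degree $N$ with nonzero leading coefficient, it has a unique expansion $p_N=\sum_{j\le N}d_{N,j}q_j$. Evaluating at $x=1$, where every $p_r$ and $q_j$ equals one, gives $\sum_jd_{N,j}=1$. So the only issue is the sign of the $d_{N,j}$.

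The point of the choice $K=c/(2\lambda)$ is the identity
\[
\alpha_r=\frac12-\frac{\lambda}{2(c+r+\lambda)},\qquad
\alpha'_j=\frac12-\frac{1}{4(K+j+1/2)}=\frac12-\frac{\lambda}{2(c+2\lambda j+\lambda)}.
\]
Both sequences increase with the index, and because $\lambda\ge1/2$ we have $c+2\lambda j+\lambda\ge c+j+\lambda$. Hence $\alpha'_j\ge\alpha_j$ for all $j$, with equality when $\lambda=1/2$, where the two families coincide. This comparison of the $\alpha$'s (equivalently of the $\gamma$'s), together with their monotonicity in the index, is exactly the type of hypothesis under which Szwarc's theorem gives nonnegative connection coefficients.

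If we reprove the criterion rather than quote it, we would argue by induction on $N$. Substituting the expansions of $p_N$ and $p_{N-1}$ into $p_{N+1}=\gamma_N^{-1}(xp_N-\alpha_Np_{N-1})$ and using the $q$-recurrence yields
\[
\gamma_Nd_{N+1,j}=\gamma'_{j-1}d_{N,j-1}+\alpha'_{j+1}d_{N,j+1}-\alpha_Nd_{N-1,j}.
\]
This is a discrete hyperbolic equation on the triangle $0\le j\le N$. Szwarc's method rewrites it so that the $(N,j)$ coefficients are combinations, with nonnegative weights, of quantities on earlier rows. Propagating along characteristics, the negative term $-\alpha_Nd_{N-1,j}$ is absorbed by the two positive terms. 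This absorption requires $\alpha_N$ not to exceed the corresponding $\alpha'$ and the monotonicity of both sequences, which are precisely the inequalities established above.

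The main obstacle is this last step: the naive induction hypothesis $d_{N,j}\ge0$ is not self-propagating because of the subtracted term. One needs the strengthened invariant from Szwarc's argument, roughly that suitable weighted differences along diagonals are nonnegative, together with a careful treatment of the boundary cases $j=0$ and $j=N$. Here the convention $W_{-1}=1$ enters, since it differs from the usual $p_{-1}=0$ and forces the $r=0$ equation to be checked by hand. Our first attempt would be to verify Szwarc's published hypotheses directly for these coefficients, with the equivalent convention $W_{-1}=W_0$ replaced by an explicit initial row.
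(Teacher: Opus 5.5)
\textbf{There is a genuine gap: the nonnegativity of the connection coefficients is never proved.}

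Your framework matches the paper's. You expand in the second family, obtain $\sum_j d_{N,j}=1$ by evaluating at $x=1$, and reduce everything to nonnegativity of the connection coefficients by comparing three-term recurrences. But that nonnegativity is the entire content of the proposition. You state no criterion whose hypotheses you check, and you describe the invariant only as ``suitable weighted differences''.

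More importantly, the inequality you verified is not the one the induction needs. Run the diagonal invariant you allude to, $d_{N,j+1}\ge(\gamma'_j/\gamma_{N-1})d_{N-1,j}$; this is exactly $c_{N,j+1}\ge c_{N-1,j}$ for the monic coefficients $c_{N,j}$ of the paper. In your recurrence, the term $-\alpha_Nd_{N-1,j}$ is then absorbed by $\alpha'_{j+1}d_{N,j+1}$ provided $\alpha'_m\gamma'_{m-1}\ge\alpha_n\gamma_{n-1}$ for $1\le m\le n$. These products are the paper's monic coefficients $\eta_m(1/2)$ and $\eta_n(\lambda)$. Their comparison does not follow from $\alpha'_j\ge\alpha_j$ plus monotonicity. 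For example, $\alpha'_{n-1}=\alpha'_n=1/2$, $\alpha_{n-1}=1/5$, $\alpha_n=1/2$ satisfy your conditions but give $\alpha'_n\gamma'_{n-1}=1/4<2/5=\alpha_n\gamma_{n-1}$.

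The paper obtains the needed inequality from the closed form $\eta_n(a)=\frac14-\frac{a(a-1)}{4(t_n(a)-1)t_n(a)}$. Here $\eta_m(1/2)$ decreases to $1/4$ from above, while $\eta_n(\lambda)\le\eta_n(1/2)$ for $1/2\le\lambda\le1$ and $\eta_n(\lambda)\le1/4$ for $\lambda\ge1$. With that, the invariant $c_{n,j}\ge c_{n-1,j-1}\ge0$ propagates at once, by splitting the right-hand side of the coefficient recurrence into three nonnegative terms.

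\textbf{Your treatment of the boundary is also incomplete.} The convention $W_{-1}=1$ is not confined to the $r=0$ equation. It puts the diagonal entries $\alpha_0$ and $\alpha'_0$ into row $0$ of the two Jacobi matrices. Consequently the equation for $d_{N+1,0}$ carries an extra term $\alpha'_0d_{N,0}$ for every $N\ge1$, which your displayed recurrence omits. This term is nonnegative and harmless; in the paper it is $(\beta_0-\beta_n)c_{n,0}$.

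The real role of $K=c/(2\lambda)$ is that $\alpha_0=\alpha'_0=c/(2(c+\lambda))$ for every $\lambda$, not only for $\lambda=1/2$. Hence the monic polynomials of degree one coincide, and $\beta_j\ge\beta_n$ for $j\le n$.

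If you prefer to cite Szwarc, you must state his theorem precisely. You must also verify all of its hypotheses for these modified recurrences, including whatever it requires of the diagonal sequence, which here is $\alpha_0,0,0,\ldots$ and hence not increasing.
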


\begin{proof}
Put $h=c/\lambda$, so $K=h/2$. Let $p_n$ and $q_n$ be the monic
normalisations of $W_n^{(c,\lambda)}$ and $W_n^{(K,1/2)}$, respectively.
The two $W$ families have positive leading coefficients by
\eqref{eq:W-recurrence}.
That recurrence gives $p_0=q_0=1$, $p_1=q_1=x-\beta_0$, and
\begin{equation}\label{eq:monic-recurrence}
\begin{aligned}
xp_n&=p_{n+1}+\beta_np_n+\eta_n(\lambda)p_{n-1},\\[7pt]
xq_n&=q_{n+1}+\beta_nq_n+\eta_n(1/2)q_{n-1},
\end{aligned}
\end{equation}
for $n\ge1$, where
\begin{equation}\label{eq:monic-diagonal}
\beta_0=\frac{h}{2(h+1)},\quad \beta_n=0\quad(n\ge1).
\end{equation}
For $a\ge1/2$ and $n\ge1$, write $t_n(a)=a(h+1)+n$. The lower
coefficient is
\begin{equation}\label{eq:monic-coefficients}
\begin{aligned}
\eta_n(a)
&=\frac{(ah+n)(a(h+2)+n-1)}
       {4(a(h+1)+n-1)(a(h+1)+n)}\\[7pt]
&=\frac14-\frac{a(a-1)}{4(t_n(a)-1)t_n(a)}>0.
\end{aligned}
\end{equation}

The sequence $\eta_n(1/2)$ decreases with $n$ and is larger than $1/4$.
For $1/2\le a\le1$, the inequalities $a(1-a)\le1/4$ and
$t_n(a)\ge t_n(1/2)>1$ give $\eta_n(a)\le\eta_n(1/2)$.
For $a\ge1$, this follows from $\eta_n(a)\le1/4$. Hence
\begin{equation}\label{eq:monic-coefficient-order}
\eta_m(1/2)\ge\eta_n(\lambda)>0\quad(1\le m\le n),
\quad
\beta_j\ge\beta_n\quad(0\le j\le n).
\end{equation}

Expand in the monic basis:
\begin{equation}\label{eq:monic-expansion}
p_n=\sum_{j=0}^n c_{n,j}q_j.
\end{equation}
Then $c_{n,n}=1$. Set $c_{n,j}=0$ for $j<0$ or $j>n$.
We prove by induction that
\begin{equation}\label{eq:connection-induction}
c_{n,j}\ge c_{n-1,j-1}\ge0,\quad 0\le j\le n,\quad n\ge1.
\end{equation}
For $n=1$, this follows from $p_1=q_1$:
$c_{1,0}=0$ and $c_{1,1}=c_{0,0}=1$.
Substituting \eqref{eq:monic-expansion} into \eqref{eq:monic-recurrence}
and comparing coefficients gives, for $n\ge1$ and $0\le j\le n+1$,
\begin{equation}\label{eq:connection-recurrence}
\begin{aligned}
c_{n+1,j}-c_{n,j-1}
={}&(\beta_j-\beta_n)c_{n,j}\\[7pt]
&+\eta_{j+1}(1/2)c_{n,j+1}-\eta_n(\lambda)c_{n-1,j}.
\end{aligned}
\end{equation}
For $0\le j<n$, the right-hand side can be written as
\begin{equation}\label{eq:connection-nonnegative-step}
\begin{aligned}
&(\beta_j-\beta_n)c_{n,j}\\[7pt]
&\quad+\bigl(\eta_{j+1}(1/2)-\eta_n(\lambda)\bigr)c_{n,j+1}\\[7pt]
&\quad+\eta_n(\lambda)\bigl(c_{n,j+1}-c_{n-1,j}\bigr).
\end{aligned}
\end{equation}
Each term is non-negative by \eqref{eq:monic-coefficient-order} and the
induction hypothesis. At $j=n$, \eqref{eq:connection-recurrence} gives
$c_{n+1,n}=c_{n,n-1}$; at $j=n+1$, it gives
$c_{n+1,n+1}=c_{n,n}=1$. This proves \eqref{eq:connection-induction},
so all $c_{n,j}$ are non-negative, including $c_{0,0}=1$.

Let $L_n,M_n>0$ be the leading coefficients of $W_n^{(c,\lambda)}$ and
$W_n^{(K,1/2)}$, respectively. Multiplying \eqref{eq:monic-expansion}
by $L_N$, with $n=N$, gives the expansion in \eqref{eq:positive-connection}
with
\begin{equation}\label{eq:convex-normalisation}
d_{N,j}=\frac{L_N}{M_j}c_{N,j}\ge0.
\end{equation}
The initial values and \eqref{eq:W-recurrence} give
$W_n^{(c,\lambda)}(1)=W_n^{(K,1/2)}(1)=1$ for every $n\ge0$.
Evaluation at $x=1$ therefore gives $\sum_{j=0}^N d_{N,j}=1$.
\end{proof}

\section{An upper bound for the auxiliary polynomials}\label{sec:auxiliary-bound}

We first prove the bound at $\lambda=1/2$.
The proof combines a rotation estimate for the initial degrees with
decay of a quadratic energy for the later degrees.
Proposition~\ref{prop:positive-connection} then transfers the result
to every $\lambda\ge1/2$.

\begin{lemma}\label{thm:Legendre-plateau}
Let $K>0$, $x=\cos\theta$, and
\begin{equation}\label{eq:auxiliary-angle-range}
0\le\theta\le\min\left\{\frac{\pi}{2},\frac{2\pi}{K}\right\}.
\end{equation}
Then
\begin{equation}\label{eq:legendre-upper-bound}
W_N^{(K,1/2)}(x)\le1\quad\text{for every}\quad N\ge-1.
\end{equation}
\end{lemma}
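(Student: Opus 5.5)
The plan is to study the recurrence \eqref{eq:W-recurrence} at $\lambda=1/2$ directly, in the rotation and energy variables suggested by the form of the bound. For $\lambda=1/2$ the recurrence rearranges to the telescoping identity
\[
(K+r+1)\bigl(W_{r+1}(x)-xW_r(x)\bigr)=(K+r)\bigl(xW_r(x)-W_{r-1}(x)\bigr),\quad r\ge0,
\]
which one checks by expanding $2(K+r+\tfrac12)x-(K+r+1)x=(K+r)x$. Writing $W_r=W_r^{(K,1/2)}$ and $x=\cos\theta$, I would introduce the quadratic form
\[
E_r=W_r^2-2xW_rW_{r-1}+W_{r-1}^2=(xW_r-W_{r-1})^2+\sin^2\theta\,W_r^2=(W_r-xW_{r-1})^2+\sin^2\theta\,W_{r-1}^2,
\]
which is positive definite for $0<\theta\le\pi/2$. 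The case $\theta=0$ is trivial, since $W_N(1)=1$. The form gives $W_r^2\le E_r/\sin^2\theta$. The aim is to show that, after some initial block of degrees, $E_r\le\sin^2\theta$, and that $E_r$ (or a suitably weighted version) does not increase from then on.

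\emph{Energy step.} Set $D_r=(K+r)(xW_r-W_{r-1})$. The identity above says $W_{r+1}-xW_r=D_r/(K+r+1)$, so the next difference is a contracted copy of the current one. Using the two expressions for $E_r$, I would compute
\[
E_{r+1}=\Bigl(\tfrac{K+r}{K+r+1}\Bigr)^2(xW_r-W_{r-1})^2+\sin^2\theta\,W_r^2\le E_r .
\]
Both terms of $E_r$ appear with coefficients at most $1$, so $E_r$ is non-increasing for every $r\ge0$. If this computation holds as written, monotonicity is automatic and the constraint $\theta\le2\pi/K$ must enter only through the initial phase. If a weight turns out to be needed (for instance if I have mis-paired the two representations), I would instead use $(K+r)^{\gamma}E_r$ with the smallest $\gamma$ that makes the step monotone.

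\emph{Rotation estimate for the initial degrees.} Since $E_0=2-2x=4\sin^2(\theta/2)$ is much smaller than $\sin^2\theta$ only when $\theta$ is near $\pi/2$, the energy bound alone does not give $W_r\le1$ for small $\theta$. I would therefore follow the solution as a rotation. For large $K$ the recurrence is close to $W_{r+1}=2xW_r-W_{r-1}$ with $W_{-1}=W_0=1$, whose solution $\cos((r+\tfrac12)\theta)/\cos(\theta/2)$ is $\le1$. In Prüfer-type variables $W_r=\rho_r\cos\varphi_r$, with $\rho_r^2$ comparable to $E_r/\sin^2\theta$, the perturbation terms $1/(K+r)$ make the angle advance by at least $\theta$ per step minus a correction of size $O(1/(K+r))$. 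The hypothesis $\theta\le2\pi/K$ should control how many steps the solution spends before $\varphi_r$ passes $\pi/2$. During that stretch $W_r$ decreases from $1$; this is the part I would prove by direct induction on the sign of $W_r-W_{r-1}$, using the telescoping identity with $D_0=K(x-1)<0$. Once the angle has reached a point where the energy satisfies $E_r\le\sin^2\theta$, the monotone energy takes over.

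\emph{Main obstacle.} The hard step is the bridge between the two regimes. One has to show that by the first degree $r_0$ at which $W_{r_0}\le0$ (or a similar landmark), the energy has dropped to at most $\sin^2\theta$. This requires quantifying the energy loss $\bigl(1-(\tfrac{K+r}{K+r+1})^2\bigr)(xW_r-W_{r-1})^2$ accumulated over the first roughly $\pi/\theta$ steps, which is $\gtrsim K\theta/2\pi$ steps in units of $K$. It is exactly here that $\theta\le2\pi/K$ should be used: it ensures that the number of early steps is of order at least $K$, so that the weights $(K+r)^{-1}$ yield a definite fractional loss. I would first try the explicit comparison $E_{r}\le E_0\cdot\prod_{s<r}(\ldots)$ combined with the lower bound $|xW_s-W_{s-1}|\gtrsim\sin\theta\,|\sin((s+\tfrac12)\theta)|$ from the rotation picture. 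If that proves too lossy, I would split into $K\le4$, where $\theta\le\pi/2$ is binding and a direct check of finitely many initial degrees suffices, and $K>4$, where the comparison with the Chebyshev-type solution is accurate.
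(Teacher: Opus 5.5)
Your energy $E_r$ and the identity $E_{r+1}=E_r-\bigl(1-(\tfrac{K+r}{K+r+1})^2\bigr)(xW_r-W_{r-1})^2$ are exactly the paper's \eqref{eq:energy-dissipation}. Your induction is also sound: $D_{r+1}=xD_r-(K+r+1)\sin^2\theta\,W_r$, so $W$ is non-increasing while it stays non-negative. The gap is the bridge between these two regimes, and the landmark you propose for it is false.

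Take $K=4$ and $\theta=\pi/2$, which is admissible. At $x=0$ the recurrence gives $W_1=-4/5$, $W_2=-5/6$, $W_3=24/35$, $W_4=35/48$ and $W_5=-64/105$. Here $E_n=W_n^2+W_{n-1}^2$, so $E_1=41/25$, $E_3\approx1.165$ and $E_4\approx1.002$. The energy first falls below $\sin^2\theta=1$ at $n=5$, although $W_1\le0$ already.

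The obvious repair also fails. One might note that negative values are harmless and wait until $W$ becomes positive again. But $W_3,W_4>0$ while $E_3,E_4>1$, and in that window neither your monotone stretch nor the energy says anything. Incidentally, $E_0/\sin^2\theta=\sec^2(\theta/2)\ge1$ for every $\theta$, so $E_0$ is never below $\sin^2\theta$. The deficit is largest, not smallest, at $\theta=\pi/2$.

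Your fallback, a Chebyshev comparison for $K>4$, cannot close this gap either. At $\theta=2\pi/K$ the comparison solution $\cos((r+\tfrac12)\theta)/\cos(\theta/2)$ equals exactly $1$ at $r=K-1$, so there is no margin. Meanwhile the accumulated perturbation $\sum_{r<K}(K+r+1)^{-1}$ tends to $\log 2$, not $0$. What is needed is an exact argument that uses the sign of the perturbation, and this is the idea missing from your proposal.

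The paper writes $\mathrm{diag}(1,\rho_s)=\rho_sI+(1-\rho_s)P$, with $P$ the projection onto the first coordinate. Then $v_N$ is a convex combination of products of the rotation $R$ and $P$. Each first coordinate is $2\sin(\theta/2)\prod_j\cos(a_j\theta)$, an average of $\cos(b\theta)$ with half-integers $0<|b|\le N+\tfrac12$. This is at most $\sin\theta$ whenever $(N+1)\theta\le2\pi$. It controls $W_N$ over the whole first revolution, not just the first quarter.

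The energy then only has to reach $\sin^2\theta$ by $N\approx K$. The paper proves this in three pieces:
\begin{itemize}
\item the two-step bound $z_n^2+z_{n+1}^2\ge(1-x)E_n$, which gives $N\ge K+1$;
\item a sharper version for $K\ge6$, which gives $N=\lfloor K\rfloor$;
\item explicit polynomial checks for the leftover degrees when $3<K<6$.
\end{itemize}
Such checks are unavoidable in this framework. In the example above, degree $4$ satisfies neither $5\theta\le2\pi$ nor $E_4\le1$.
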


\begin{proof}
At the fixed point $x=\cos\theta$, write
$q_n=W_n^{(K,1/2)}(x)$ for $n\ge-1$.
Then $q_{-1}=q_0=1$, and \eqref{eq:W-recurrence} becomes
\begin{equation}\label{eq:legendre-W-recurrence}
(K+n+1)q_{n+1}
=(2K+2n+1)xq_n-(K+n)q_{n-1},\quad n\ge0.
\end{equation}
The cases $N=-1$ and $x=1$ are immediate.  Hence assume $N\ge0$ and
$0\le x<1$.
\smallskip
\noindent\emph{The rotation estimate.}
Put $\sigma=\sin\theta$. For $n\ge0$, set
\begin{equation}\label{eq:phase-vector}
z_n=xq_n-q_{n-1},\quad
v_n=\begin{pmatrix}\sigma q_n\\[7pt]z_n\end{pmatrix}.
\end{equation}
For $s>0$, put $\rho_s=s/(s+1)$ and
$D_s=\mathop{\rm diag}(1,\rho_s)$.  Then
\eqref{eq:legendre-W-recurrence} becomes
\begin{equation}\label{eq:rotation-contraction}
v_{n+1}=RD_{K+n}v_n,
\quad
R=\begin{pmatrix}x&\sigma\\[7pt]-\sigma&x\end{pmatrix},\quad n\ge0.
\end{equation}
Also
\begin{equation}\label{eq:initial-vector}
v_0=2\sin(\theta/2)
\begin{pmatrix}\cos(\theta/2)\\[7pt]-\sin(\theta/2)\end{pmatrix}.
\end{equation}
Let $I$ be the identity and $P$ the projection onto the first
coordinate. Since $D_s=\rho_sI+(1-\rho_s)P$,
iteration of \eqref{eq:rotation-contraction} expresses $v_N$ as a
convex combination of the products obtained by choosing $I$ or $P$ at
each step.  If the projections occur at the steps
$0\le i_1<\cdots<i_r\le N-1$, its first coordinate is
\begin{equation}\label{eq:projection-product}
2\sin(\theta/2)\cos((i_1+1/2)\theta)
\prod_{j=2}^r\cos((i_j-i_{j-1})\theta)
\cos((N-i_r)\theta).
\end{equation}
If there is no projection, the first coordinate is
$2\sin(\theta/2)\cos((N+1/2)\theta)$.  Thus in every case it has the form
\begin{equation}\label{eq:cosine-product}
2\sin(\theta/2)\prod_{j=0}^r\cos(a_j\theta),
\end{equation}
where one of $a_0,\ldots,a_r$ is a positive half-integer, the others
are positive integers, and $\sum_{j=0}^r a_j=N+1/2$.
Expanding the product in \eqref{eq:cosine-product} gives an average of cosines
$\cos(b\theta)$, where $b$ is a nonzero half-integer and
$|b|\le N+1/2$.  If $(N+1)\theta\le2\pi$, then
\begin{equation}\label{eq:rotation-frequencies}
\frac{\theta}{2}\le |b|\theta\le2\pi-\frac{\theta}{2},
\end{equation}
so every such cosine is at most $\cos(\theta/2)$.  Hence the first
coordinate of $v_N$ is at most $\sigma$, and
\begin{equation}\label{eq:rotation-bound}
q_N\le1\quad\text{whenever}\quad (N+1)\theta\le2\pi.
\end{equation}

\smallskip
\noindent\emph{A two-step energy estimate.}
To control the later degrees, put $y=1-x$ and define the energy
\begin{equation}\label{eq:energy-definition}
E_n=\lVert v_n\rVert^2
=\sigma^2q_n^2+z_n^2.
\end{equation}
For $s>0$, put
$\delta_s=1-\rho_s^2=(2s+1)/(s+1)^2$.
Since $R$ is orthogonal, \eqref{eq:rotation-contraction} gives
\begin{equation}\label{eq:energy-dissipation}
E_{n+1}=E_n-\delta_{K+n}z_n^2,\quad n\ge0.
\end{equation}
Relation \eqref{eq:rotation-contraction} also gives
\begin{equation}\label{eq:weak-two-step-form}
z_n^2+z_{n+1}^2\ge yE_n.
\end{equation}
Indeed, if $u=\sigma q_n$ and $v=z_n$, then
\begin{equation}\label{eq:weak-quadratic-identity}
z_n^2+z_{n+1}^2-yE_n
=x\left[y\left(u-\frac{\sigma\rho_{K+n}}{y}v\right)^2
+\delta_{K+n}v^2\right]\ge0.
\end{equation}
Since $\delta_s$ decreases with $s$, combining
\eqref{eq:energy-dissipation} at two consecutive steps with
\eqref{eq:weak-two-step-form} gives
\begin{equation}\label{eq:weak-energy-step}
E_{n+2}\le
(1-y\delta_{K+n+1})E_n,\quad n\ge0.
\end{equation}
The initial values are
$E_0=2y$ and
$E_1=2y(1-y\delta_K/2)$.  Thus, for
$N=2M,\quad M\ge0$,
\begin{equation}\label{eq:weak-even-product}
E_N\le2y\prod_{j=0}^{M-1}
(1-y\delta_{K+2j+1}),
\end{equation}
whereas, for $N=2M+1,\quad M\ge0$,
\begin{equation}\label{eq:weak-odd-product}
E_N\le2y(1-y\delta_K/2)
\prod_{j=0}^{M-1}(1-y\delta_{K+2j+2}).
\end{equation}
Each product in \eqref{eq:weak-even-product} and
\eqref{eq:weak-odd-product} has the form
$F(y)=\prod_{i=1}^{\ell}(1-a_i y)$, where
$0\le a_i\le1,\quad 1\le i\le\ell$.  It is convex on $[0,1]$, since all
its factors are non-negative and direct differentiation gives
$F''(y)\ge0$.
Moreover,
\begin{equation}\label{eq:telescoping-factor}
1-\delta_s=\left(\frac{s}{s+1}\right)^2\le\frac{s}{s+2}.
\end{equation}
Using \eqref{eq:telescoping-factor} in \eqref{eq:weak-even-product},
the product at $y=1$ is at most
\begin{equation}\label{eq:weak-even-endpoint}
\frac{K+1}{K+N+1}.
\end{equation}
For \eqref{eq:weak-odd-product}, it is at most
\begin{equation}\label{eq:weak-odd-endpoint}
\left(1-\frac{\delta_K}{2}\right)
\frac{K+2}{K+N+1}.
\end{equation}
Both \eqref{eq:weak-even-endpoint} and \eqref{eq:weak-odd-endpoint}
are at most $1/2$ when $N\ge K+1$; for the second one
we also use $\delta_K\ge2/(K+2)$.  Convexity gives
$F(y)\le1-y/2$, and hence
\begin{equation}\label{eq:weak-energy-bound}
E_N\le2y-y^2=\sigma^2.
\end{equation}
By \eqref{eq:energy-definition} and \eqref{eq:energy-dissipation},
this implies $|q_n|\le1$ for every $n\ge N$.

\smallskip
\noindent\emph{The sharper bound for $K\ge6$.}
For $x\ge1/2$, we strengthen \eqref{eq:weak-two-step-form} to
\begin{equation}\label{eq:strong-two-step-form}
z_n^2+z_{n+1}^2\ge c_syE_n,
\quad c_s=\frac{2(s+2)}{2s+3},\quad s=K+n>0.
\end{equation}
The quadratic form obtained by subtracting the right-hand side
of \eqref{eq:strong-two-step-form} from the left has first principal minor
$y(x-1/(2s+3))>0$, and its determinant divided by $y$ is
\begin{equation}\label{eq:strong-determinant}
(c_s-1)^2+(1-c_s^2)x+c_s\delta_sx^2.
\end{equation}
The polynomial in \eqref{eq:strong-determinant} is convex.
At $x=1/2$, its value and derivative are
\begin{equation}\label{eq:strong-determinant-endpoint}
\frac{3s^2+5s+1}{2(s+1)^2(2s+3)^2}>0,
\quad
\frac{4s^3+17s^2+20s+5}{(s+1)^2(2s+3)^2}>0.
\end{equation}
The quadratic form is therefore positive definite by the principal-minor
criterion \cite[Theorem~7.2.5(b), p.~439]{HornJohnson2013}.
Since $c_s\delta_{s+1}=2/(s+2)$, combining
\eqref{eq:strong-two-step-form} with \eqref{eq:energy-dissipation} gives
\begin{equation}\label{eq:strong-energy-step}
E_{n+2}\le
\left(1-\frac{2y}{K+n+2}\right)E_n,\quad n\ge0.
\end{equation}

Suppose that $K\ge6$ and set $N=\lfloor K\rfloor$.  Then
$x\ge\cos(2\pi/K)\ge1/2$ and $N\ge6$.
Iteration of \eqref{eq:strong-energy-step}, starting with
$E_0=2y$ or $E_1=2y(1-y\delta_K/2)$, gives
$E_N\le2yF_N(y)$, where
\begin{equation}\label{eq:strong-products}
\begin{aligned}
F_{2M}(y)&=\prod_{j=0}^{M-1}\left(1-\frac{2y}{K+2j+2}\right),\\[7pt]
F_{2M+1}(y)&=\left(1-\frac{y\delta_K}{2}\right)
\prod_{j=0}^{M-1}\left(1-\frac{2y}{K+2j+3}\right).
\end{aligned}
\end{equation}
If $N=2M$, then $K<2M+1$, and \eqref{eq:strong-products} gives
\begin{equation}\label{eq:even-comparison-product}
F_N(1/2)<C_M,
\quad
C_M=\prod_{j=0}^{M-1}
\frac{2M+2j+2}{2M+2j+3}.
\end{equation}
For $M\ge3$, the numbers $C_M$ decrease because
\begin{equation}\label{eq:comparison-product-ratio}
\frac{C_{M+1}}{C_M}
=\frac{(4M+2)(4M+4)(2M+3)}
{(2M+2)(4M+3)(4M+5)}<1;
\end{equation}
the difference between the denominator and numerator is $6M+6$.
Hence
\begin{equation}\label{eq:comparison-product-bound}
C_M\le C_3=\frac{320}{429}<\frac34.
\end{equation}
If $N=2M+1$, use $K<2M+2$ in \eqref{eq:strong-products}.
Direct cancellation of the factors gives
\begin{equation}\label{eq:odd-comparison-product}
F_N(1/2)<C_M\left(
1-\frac{2M+5}{4(M+1)(2M+3)(4M+3)}\right)<C_M.
\end{equation}
Combining \eqref{eq:even-comparison-product},
\eqref{eq:comparison-product-bound}, and \eqref{eq:odd-comparison-product}
gives $F_N(1/2)<3/4$ in both cases.  Convexity on $[0,1/2]$ now gives
$F_N(y)\le1-y/2$.  Therefore $E_N\le\sigma^2$ and
$|q_n|\le1$ for $n\ge N$.  Estimate \eqref{eq:rotation-bound} covers $n<N$, since
$(n+1)\theta\le N\theta\le K\theta\le2\pi$.

\smallskip
\noindent\emph{The remaining degrees for $K<6$.}
The rotation bound
\eqref{eq:rotation-bound} and the weak energy bound
\eqref{eq:weak-energy-bound}, valid for $N\ge K+1$, leave only
degree $4$ for $3<K<5$, degree $5$ for $4<K<6$, and degree $6$
for $5<K<6$. Lemma~\ref{lem:exceptional-auxiliary-degrees} in
Appendix~\ref{app:exceptional} covers degree $4$ on $[0,1]$
and degree $5$ when $x\le1/2$. If $x\ge1/2$, then $6\theta\le2\pi$, so
\eqref{eq:rotation-bound} also covers degree $5$.
It therefore remains to control degree $6$ for $5<K<6$.
In this range,
$x\ge\cos(2\pi/5)>3/10$, so $y<7/10$.  Since $\delta_s$ decreases,
\begin{equation}\label{eq:degree-six-deltas}
\delta_{K+1}\ge\delta_7=\frac{15}{64},\quad
\delta_{K+3}\ge\delta_9=\frac{19}{100},\quad
\delta_{K+5}\ge\delta_{11}=\frac{23}{144}.
\end{equation}
Using \eqref{eq:degree-six-deltas} in \eqref{eq:weak-even-product} with $N=6$ gives
\begin{equation}\label{eq:degree-six-product}
E_6\le2yF_*(y),
\quad
F_*(y)=\left(1-\frac{15y}{64}\right)
\left(1-\frac{19y}{100}\right)
\left(1-\frac{23y}{144}\right).
\end{equation}
The product $F_*$ is convex, $F_*(0)=1$, and
\begin{equation}\label{eq:degree-six-product-bound}
F_*(7/10)<\frac{21}{25}\frac{87}{100}\frac89
=\frac{406}{625}<\frac{13}{20}.
\end{equation}
Hence $F_*(y)\le1-y/2$, and $|q_n|\le1$ for every $n\ge6$.

\end{proof}

\begin{corollary}\label{cor:auxiliary-upper-bound}
Let $\lambda\ge1/2$, $c>0$, and
$0\le\theta\le\min\{\pi/2,4\pi\lambda/c\}$. Then
\begin{equation}\label{eq:auxiliary-upper-bound}
W_N^{(c,\lambda)}(\cos\theta)\le1\quad\text{for every}\quad N\ge-1.
\end{equation}
\end{corollary}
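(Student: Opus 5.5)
The corollary follows by combining Proposition~\ref{prop:positive-connection} with Lemma~\ref{thm:Legendre-plateau}. Put $K=c/(2\lambda)$, so that $2\pi/K=4\pi\lambda/c$. The hypothesis on $\theta$ then reads
\[
0\le\theta\le\min\left\{\frac{\pi}{2},\frac{2\pi}{K}\right\},
\]
which is exactly the angle range \eqref{eq:auxiliary-angle-range} required in Lemma~\ref{thm:Legendre-plateau} for the parameter $K$. Since $c>0$ and $\lambda\ge1/2$, we have $K>0$, so that lemma applies.

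First we dispose of the trivial index. For $N=-1$, the initial condition in \eqref{eq:W-recurrence} gives $W_{-1}^{(c,\lambda)}=1$, so \eqref{eq:auxiliary-upper-bound} holds. Now let $N\ge0$ and apply Proposition~\ref{prop:positive-connection} with the given $c$ and $\lambda$. It yields coefficients $d_{N,j}\ge0$ with $\sum_{j=0}^N d_{N,j}=1$ and
\[
W_N^{(c,\lambda)}(x)=\sum_{j=0}^N d_{N,j}W_j^{(K,1/2)}(x)
\]
identically in $x$. Evaluate this at $x=\cos\theta$. By Lemma~\ref{thm:Legendre-plateau}, each term satisfies $W_j^{(K,1/2)}(\cos\theta)\le1$ for $0\le j\le N$. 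The right-hand side is a convex combination of numbers that are at most $1$, so it is itself at most $1$. This gives \eqref{eq:auxiliary-upper-bound}.

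There is no real obstacle here. The only points to check are the identification $2\pi/K=4\pi\lambda/c$, which matches the angle ranges, and that the index range $0\le j\le N$ of the connection formula lies within the range $N\ge-1$ covered by Lemma~\ref{thm:Legendre-plateau}. All the analytic difficulty is carried by those two earlier results.
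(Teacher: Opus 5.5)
Your proposal is correct and matches the paper's proof: it settles $N=-1$ from the initial condition in \eqref{eq:W-recurrence}, and for $N\ge0$ it writes $W_N^{(c,\lambda)}$ as a convex combination of the $W_j^{(K,1/2)}$ via Proposition~\ref{prop:positive-connection}. Since $K=c/(2\lambda)$ turns the angular hypothesis into exactly the range of Lemma~\ref{thm:Legendre-plateau}, each term is at most one.
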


\begin{proof}
The case $N=-1$ follows from \eqref{eq:W-recurrence}.
For $N\ge0$, Proposition~\ref{prop:positive-connection} expresses
$W_N^{(c,\lambda)}$ as a convex combination of
$W_j^{(K,1/2)}$, where $K=c/(2\lambda)$.
The angular hypothesis is precisely $\theta\le2\pi/K$, so
Lemma~\ref{thm:Legendre-plateau} bounds each term by one.
\end{proof}

\section{The largest zero and the positive intervals}\label{sec:largest-zero}

\begin{proposition}\label{prop:angle}
Let $k\ge3$ and put $\theta_*=\arccos\xi_{k-1}$. Then
\begin{equation}\label{eq:largest-zero-angle}
\theta_*\le\frac{4\pi\lambda}{k+1}.
\end{equation}
\end{proposition}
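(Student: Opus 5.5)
The plan is to bound the largest zero $\xi_{k-1}$ of $Q_{k-1}=R_{k-1}^{(\alpha+1,\alpha)}$ from below. I will use a Sturm comparison for the Jacobi differential equation in the angular variable, which puts the zero close to the first zero of a Bessel function. Put $n=k-1\ge2$, $a=\alpha+1$, $b=\alpha$, and $N=n+(a+b+1)/2=k+\alpha$. With $x=\cos\theta$, the function
\[
u(\theta)=\sin(\theta/2)^{a+1/2}\cos(\theta/2)^{b+1/2}P_n^{(a,b)}(\cos\theta)
\]
satisfies $u''+\Phi(\theta)u=0$ on $(0,\pi)$, where
\[
\Phi(\theta)=N^2+\frac{1/4-a^2}{4\sin^2(\theta/2)}+\frac{1/4-b^2}{4\cos^2(\theta/2)}
\]
(see \cite[(4.24.2)]{Szego1975}). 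The zeros of $u$ in $(0,\pi)$ are the angles of the zeros of $P_n^{(a,b)}$. Hence $\theta_*$ is the first positive zero of $u$, and $u\sim c\,\theta^{a+1/2}$ as $\theta\to0$.

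First I bound $\Phi$ from below on an interval $[0,\Theta]$ that contains the expected zero. I take $\Theta=4\pi\lambda/(k+1)$, capped at $\pi/2$ when this quantity is larger. On such an interval $\cos^2(\theta/2)\ge1/2$. Also $\sin(\theta/2)\ge(\theta/2)\cdot\frac{\sin(\Theta/2)}{\Theta/2}$ by concavity. Since $a^2-1/4>0$, this gives
\[
\Phi(\theta)\ge M^2-\frac{a^2-1/4}{\gamma^2\theta^2},\qquad M^2=N^2-\frac{(b^2-1/4)_+}{2},\qquad \gamma=\frac{\sin(\Theta/2)}{\Theta/2}\le 1.
\]
The comparison function $v(\theta)=\sqrt{\theta}\,J_\nu(M\theta)$ solves $v''+(M^2-(\nu^2-1/4)/\theta^2)v=0$. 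Here I choose $\nu\ge a$ with $\nu^2-1/4=(a^2-1/4)/\gamma^2$. Both $u$ and $v$ have the same type of behaviour at $0$, since $u$ vanishes to the lower order $a+1/2\le\nu+1/2$. So Sturm comparison in its Wronskian form on $(0,\theta]$ shows that $u$ has a zero before the first zero of $v$. This gives
\[
\theta_*\le \frac{j_{\nu,1}}{M}.
\]

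Next I make this explicit. I use an elementary upper bound for the first Bessel zero, for instance $j_{\nu,1}\le\sqrt{\nu+1}\,\bigl(\sqrt{\nu+2}+1\bigr)$. It then suffices to check
\[
j_{\nu,1}\,(k+1)\le 4\pi\bigl(\alpha+\tfrac12\bigr)M .
\]
Both sides grow linearly in $\alpha$ and $k$. In the worst case $\alpha=0$, $k=3$ we have $j_{1,1}\approx3.83$ and $N=3$, so the left side is about $15.3$ while the right side is about $6\pi\approx18.8$. The slack therefore absorbs a $\gamma$ somewhat below $1$ and the small loss in $M$. For large $\alpha$ the left side behaves like $\alpha(k+1)$ and the right side like $4\pi\alpha(k+\alpha)$, which again leaves ample room. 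I will split into the case where the cap $\Theta=\pi/2$ is active, which needs large $\alpha$ relative to $k$, and the case where it is not. In the capped case $\gamma\ge\sin(\pi/4)/(\pi/4)$ is a fixed constant, and the trivial bound $\theta_*\le\pi/2$ may even suffice when $\xi_{k-1}\ge0$, by symmetry of interlacing.

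The main obstacle is the bookkeeping that makes the comparison rigorous and uniform in $\alpha\ge0$ and $k\ge3$. One must verify that the first zero of $v$ actually lies inside $[0,\Theta]$, so that the lower bound on $\Phi$ is used only where it is valid; this is a self-consistency check. One must also control $\nu$, which is inflated by $1/\gamma$. I would first attempt the crude choice $\gamma\ge 2/\pi$ together with the Bessel bound above, and then check the resulting inequality in $(\alpha,k)$ by monotonicity in $k$ and a short calculus argument in $\alpha$. If that crude choice fails near $\alpha=0$, $k=3$, I would treat the few small cases directly using the explicit polynomials $Q_2$ and $Q_3$.
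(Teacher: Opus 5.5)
Your argument is sound in outline and gives a genuinely different proof.

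\textbf{How the paper argues.} The paper never uses the differential equation. It realises $1$ and the zeros of $Q_{k-1}$ as the spectrum of a tridiagonal matrix, which becomes symmetric after weighting by $b_j=w_j+w_{j-1}$. It inserts the test vector $f_j=j$ into the weighted Rayleigh quotient and evaluates the resulting Pochhammer sums to get $1-\xi_{k-1}\le12\tau^2$ with $\tau=2\lambda/(k+1)$. The angle bound then follows from $\sin(\pi\tau)\ge2\sqrt2\,\tau$ when $\tau\le1/4$, and from $\xi_{k-1}>0$ when $\tau\ge1/4$.

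\textbf{How your route compares.} Your cap $\Theta=\pi/2$ is exactly the threshold $\tau=1/4$, so the case split is the same. The difference is the estimate in the uncapped case. There you use Sturm comparison at the singular endpoint together with an external upper bound for $j_{\nu,1}$, for which you must give a reference. That estimate closes uniformly, with no small cases:
\begin{itemize}
\item[(i)] $M\ge k$ gives $\Theta M\ge4\pi\lambda k/(k+1)\ge3\pi\lambda$;
\item[(ii)] $\gamma\ge2\sqrt2/\pi$ gives $\nu\le a/\gamma\le\pi(\alpha+1)/(2\sqrt2)$;
\item[(iii)] at $\alpha=0$ your Bessel bound is about $4.02<3\pi/2$, and the Bessel side grows in $\alpha$ much more slowly than $3\pi\lambda$.
\end{itemize}
Your route is asymptotically sharp for large $k$, since it recovers $\theta_*\approx j_{\alpha+1,1}/(k+\alpha)$. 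The paper's route is self-contained, using only the recurrence and explicit hypergeometric sums, with no Bessel-zero inequalities or singular-endpoint analysis.

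\textbf{One point to tighten.} In the capped regime $k+1<8\lambda$, the trivial bound is not something that ``may'' replace the Bessel comparison. It is required, because the comparison's self-consistency condition $j_{\nu,1}/M\le\pi/2$ fails when $\alpha$ is large relative to $k$. For example, with $k=3$ and $\alpha=20$ one finds $j_{\nu,1}/M\approx1.60>\pi/2$.

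The trivial bound always holds, since $\xi_{k-1}\ge\xi_2>0$ for $k\ge3$. This does not follow from symmetry, because $Q_n$ is not even. Argue as the paper does: $Q_2(0)=-1/(2\alpha+6)<0<Q_2(1)$, and the largest zeros $\xi_n$ increase with $n$ by interlacing.
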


\begin{proof}
Put $r=k-1$ and $d=2\lambda+1\ge2$.  Let $A$ be the tridiagonal matrix
with rows and columns indexed by $0,\ldots,r$.  Its upper and lower
entries are
\begin{equation}\label{eq:spectral-matrix-entries}\begin{aligned}
a_j&=\frac{j+2\lambda}{2(j+\lambda)},\quad 0\le j<r,\\[7pt]
c_j&=\frac{j}{2(j+\lambda)},\quad 1\le j\le r.
\end{aligned}\end{equation}
The only nonzero diagonal entry is
$A(r,r)=a_r$, where $a_r$ is given by the same formula.  The last row
has the following meaning: for a real number $t$, the vector
$(U_0(t),\ldots,U_r(t))^{\mathsf T}$
is an eigenvector with eigenvalue $t$ precisely when
$U_{r+1}(t)=U_r(t)$.  Recurrence \eqref{eq:ultraspherical-recurrence}, followed by
\eqref{eq:contiguous-difference}, shows that the eigenvalues of $A$ are $1$ and the zeros of
$Q_r$.
Since $Q_2(0)=-1/(d+4)<0<Q_2(1)$, interlacing gives
$\xi_r>0$ \cite[Theorem~3.3.2]{Szego1975}.  In particular,
$\theta_*<\pi/2$.

Set $w_{-1}=0$ and introduce the weights
\begin{equation}\label{eq:spectral-weights}
w_j=\frac{(d)_j}{j!},\quad b_j=w_j+w_{j-1},\quad j\ge0.
\end{equation}
For any real number $\gamma$, put $(\gamma)_0=1$ and
$(\gamma)_j=\gamma(\gamma+1)\cdots(\gamma+j-1),\quad j\ge1$.
We have
\begin{equation}\label{eq:detailed-balance}
b_ja_j=w_j=b_{j+1}c_{j+1},\quad 0\le j<r.
\end{equation}
Let $B=\mathop{\rm diag}(b_0,\ldots,b_r)$. Identities \eqref{eq:detailed-balance} give
$BA=A^{\mathsf T}B$, so $B^{1/2}AB^{-1/2}$ is real symmetric and
similar to $A$.
Similarity and the spectral theorem justify the $b$-weighted spectral
decomposition used below
\cite[Sections~1.3 and~4.1]{HornJohnson2013}.
The number $\xi_r$ is the second largest eigenvalue.
For every non-constant vector
$f=(f_0,\ldots,f_r)^{\mathsf T}$,
the Rayleigh theorem in the $b$-weighted inner product
\cite[Section~4.2]{HornJohnson2013}, applied after subtracting the
weighted mean, gives
\begin{equation}\label{eq:rayleigh-quotient}
1-\xi_r\le
\frac{\sum_{j=0}^{r-1}w_j(f_{j+1}-f_j)^2}
{\sum_{j=0}^rb_j(f_j-\overline f)^2},
\quad
\overline f=\frac{\sum_{j=0}^rb_jf_j}{\sum_{j=0}^rb_j}.
\end{equation}
Take $f_j=j$.  The identity
\begin{equation}\label{eq:pochhammer-sum}
\sum_{j=0}^m\frac{(\gamma)_j}{j!}
=\frac{(\gamma+1)_m}{m!},\quad m\ge0,
\end{equation}
and its first two moment versions make the calculation explicit.  If
\begin{equation}\label{eq:moment-factor}
C=\frac{(d+1)_{r-1}}{(r-1)!},
\end{equation}
then
\begin{equation}\label{eq:spectral-moments}\begin{aligned}
\sum_{j=0}^r b_j
&=C\frac{d+2r}{r},\\[7pt]
\sum_{j=0}^r jb_j
&=C\frac{d^2+2d r+1}{d+1},\\[7pt]
\sum_{j=0}^r j^2b_j
&=C\left(
\frac{d(r-1)(d+2r-2)}{d+2}
+\frac{d(d+4r-3)}{d+1}+1\right).
\end{aligned}\end{equation}
The numerator in \eqref{eq:rayleigh-quotient} is $C$.
Substituting \eqref{eq:spectral-moments} into that quotient gives
\begin{equation}\label{eq:rational-zero-bound}
1-\xi_r\le\frac{(d+1)^2(d+2)(d+2r)}
{(d+r)(d^3+4d^2r+4d r^2+d+2)}.
\end{equation}

Set
\begin{equation}\label{eq:angle-parameter}
\tau=\frac{d-1}{r+2}=\frac{2\lambda}{k+1}.
\end{equation}
Dividing \eqref{eq:rational-zero-bound} by $\tau^2$ and using
$d\ge2$ and $r\ge2$ gives
\begin{equation}\label{eq:rational-bound-estimate}
\frac{1-\xi_r}{\tau^2}
\le
\left(\frac{d+1}{d-1}\right)^2\frac{d+2}{d}
\frac{(r+2)^2}{(d+r)(d+2r)}
\le12.
\end{equation}
The three factors in \eqref{eq:rational-bound-estimate} are at most $9$, $2$, and $2/3$;
for the last one use $d+r\ge r+2$ and
$d+2r\ge2r+2$.
If $\tau\le1/4$, concavity of sine gives
\begin{equation}\label{eq:sine-chord}
\sin(\pi\tau)\ge2\sqrt2\tau.
\end{equation}
Combining \eqref{eq:rational-bound-estimate} and \eqref{eq:sine-chord} yields
\begin{equation}\label{eq:angle-comparison}
1-\cos(2\pi\tau)=2\sin^2(\pi\tau)
\ge16\tau^2\ge12\tau^2\ge1-\xi_r.
\end{equation}
The monotonicity of $1-\cos\theta$ on $[0,\pi]$ gives
$\theta_*\le2\pi\tau$.  If $\tau\ge1/4$, the same inequality is
automatic because $\theta_*<\pi/2\le2\pi\tau$.  Thus
$\theta_*\le2\pi\tau=4\pi\lambda/(k+1)$ in every case.
\end{proof}

\subsection{The intervals with \texorpdfstring{$k\ge3$}{k >= 3}}

We have already seen that $\xi_2>0$.  Interlacing
\cite[Theorem~3.3.2]{Szego1975} gives $\xi_{k-1}>0$ for $k\ge3$.  Thus
$x\in[\xi_{k-1},\xi_k]$ can be written as $x=\cos\theta$ with
$0\le\theta<\pi/2$.  Since $x\ge\xi_{k-1}$,
Proposition~\ref{prop:angle} gives
\begin{equation}\label{eq:application-angle}
\theta\le\theta_*\le\frac{4\pi\lambda}{k+1}
\le\frac{4\pi\lambda}{k}.
\end{equation}
Corollary~\ref{cor:auxiliary-upper-bound} applies to both $c=k$ and $c=k+1$.  Hence
the two values $W_{m-k}^{(k,\lambda)}(x)$ and
$W_{m-k-1}^{(k+1,\lambda)}(x)$ in Lemma~\ref{lem:representation} are
at most one. Substituting these bounds and \eqref{eq:interval-signs}
into \eqref{eq:representation-difference} gives
$U_m(x)\ge U_k(x),\quad m>k$.  The preceding degrees were already
controlled by \eqref{eq:contiguous-difference} and \eqref{eq:interval-signs}.  This proves
Theorem~\ref{thm:main} when $k\ge3$.

\section{The first two intervals and completion}\label{sec:first-intervals}

It remains to treat $k=1,2$, where the intervals contain negative values
of $x$. Recall that $d=2\alpha+2=2\lambda+1\ge2$.
We use the normalised integral representation
\begin{equation}\label{eq:integral-representation}
U_n(x)=\mathbb E\bigl(x+i\sqrt{1-x^2}S\bigr)^n,\quad
\mathbb ES^{2j}=\frac{(2j-1)!!}{d(d+2)\cdots(d+2j-2)},\quad j\ge0,
\end{equation}
where $S$ has density proportional to $(1-s^2)^{\lambda-1}$ on
$[-1,1]$, and $\mathbb E$ denotes expectation with respect to $S$.
Here $(-1)!!=1$.  The symmetry of $S$ makes the expectation
real.  The representation follows, after normalisation, from
\cite[(18.10.4)]{DLMF}; the moments follow from the beta integral
\cite[(5.12.1)]{DLMF}.

Put
\begin{equation}\label{eq:initial-thresholds}
t_0=\frac1{d+1},\quad t_1=\frac1{1+\sqrt{d+4}}.
\end{equation}
The first two breakpoints and the second polynomial are
\begin{equation}\label{eq:initial-endpoints}
\xi_1=-t_0,\quad \xi_2=t_1,\quad
U_2(x)=\frac{(d+1)x^2-1}{d}.
\end{equation}
In particular, $U_2(x)<0$ for $-t_0\le x\le t_1$.

\begin{lemma}\label{lem:large-initial-degrees}
For every $n\ge7$,
\begin{equation}\label{eq:large-initial-degrees}\begin{aligned}
U_n(x)&\ge x,\quad -1\le x\le-t_0,\\[7pt]
U_n(x)&\ge U_2(x),\quad -t_0\le x\le t_1.
\end{aligned}\end{equation}
\end{lemma}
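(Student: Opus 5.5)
We prove the two inequalities of Lemma~\ref{lem:large-initial-degrees} from the integral representation \eqref{eq:integral-representation}, using a uniform decay bound for $U_n$ on compact subsets of $(-1,1)$ together with a separate analysis near $x=-1$.

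\emph{Step 1: a uniform bound.} Write $x=\cos\theta$. For $|s|\le1$ we have $|x+i\sqrt{1-x^2}\,s|^2=x^2+(1-x^2)s^2=1-(1-x^2)(1-s^2)$. Hence
\[
|U_n(x)|\le \mathbb E\bigl(1-(1-x^2)(1-S^2)\bigr)^{n/2}.
\]
On the middle interval $-t_0\le x\le t_1$ we have $x^2\le t_1^2\le t_0'$, where $t_0'$ is an explicit constant (about $1/(1+\sqrt6)^2<0.09$ when $d=2$). Consequently $1-x^2\ge 1-t_1^2$, and the right-hand side is at most $\mathbb E\bigl(1-c(1-S^2)\bigr)^{n/2}$ with $c=1-t_1^2$. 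We evaluate or bound this by a beta integral, expanding binomially in the even moments $\mathbb ES^{2j}$ when that is convenient.

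\emph{Step 2: the middle interval.} For $-t_0\le x\le t_1$, \eqref{eq:initial-endpoints} gives $U_2(x)\le \max\{U_2(-t_0),U_2(t_1)\}<0$. Indeed, $U_2$ is convex with minimum $-1/d$ at $0$, and its endpoint values are explicit in $d$: $U_2(-t_0)=-1/(d+1)$, and $U_2(t_1)$ is given by \eqref{eq:initial-endpoints}. It therefore suffices to show $|U_n(x)|\le |U_2|_{\min}$ on the interval for $n\ge7$, where $|U_2|_{\min}$ is the smaller of the two endpoint moduli. Since $S^2$ concentrates near $0$ as $d$ grows (its variance is $1/d$), the bound from Step~1 behaves like $(1-c)^{n/2}$ plus a term of order $d^{-n/2}$. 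We need to compare this with a quantity of order $1/d$, uniformly in $d\ge2$. I would first check the case $d=2$ directly, and then show that the ratio of the two sides is monotone in $d$, or else treat large $d$ asymptotically and handle a finite range numerically. Once $n\ge7$, the decay from $n/2\ge 3.5$ should dominate the $1/d$ scale.

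\emph{Step 3: the left interval $-1\le x\le -t_0$.} Here the target $x$ is negative and equals $-1$ at the left endpoint, where $U_n(-1)=(-1)^n$. For even $n$ the claim $U_n(x)\ge x$ holds trivially whenever $U_n\ge -1$, and $|U_n|\le1$ holds on $[-1,1]$. For odd $n$ we may use symmetry: $U_n(x)=-U_n(-x)$, so the claim becomes $U_n(u)\le u$ for $u=-x\in[t_0,1]$. Near $u=1$ this follows from $U_n(1)=1$ together with $U_n'(1)=n(n+2\lambda)/(2\lambda+1)>1$, which places $U_n$ below the diagonal just to the left of $1$. Indeed, the contiguous relation \eqref{eq:contiguous-difference}, summed over degrees, gives
\[
1-U_n(u)=\frac{1-u}{\alpha+1}\sum_{j<n}(j+\alpha+1)Q_j(u).
\]
The task is then to show that $\sum_{j<n}(j+\alpha+1)Q_j(u)\ge \alpha+1$ for $u\ge t_0$. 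For $u$ beyond the largest zero of $Q_{n-1}$ all terms are positive and the $j=0$ term alone gives $\alpha+1$. For smaller $u\in[t_0,1)$ I would instead use the bound of Step~1: there $|U_n(u)|\le \mathbb E(1-(1-u^2)(1-S^2))^{n/2}$, and it suffices to show this is $\le u$. That holds provided $u$ stays away from $1$, which must be reconciled with the region near $1$ handled above.

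\emph{Main obstacle.} The difficulty is the uniformity in $d\in[2,\infty)$, together with stitching together the region near $u=1$, where only the positivity argument via the $Q_j$ works, and the interior region, where the Step~1 bound works. The two regions must overlap for every $n\ge7$ and every $d$. I would try to choose the cut point as the largest zero $\xi_{n-1}$ and verify, using Proposition~\ref{prop:angle}, that the Step~1 estimate already gives a value $\le u$ at $u=\xi_{n-1}$. Failing that, I would fall back on finitely many explicit polynomial checks for small $n$ combined with an asymptotic argument for large $n$.
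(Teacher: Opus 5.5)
\textbf{There is a genuine gap.} Your Step~1 bound is the right tool. It is exactly the estimate $U_n(x)\ge-\mathbb E\,Y_{|x|}^{n/2}$, with $Y_t=t^2+(1-t^2)S^2$, on which the paper's proof rests. But the proposal never turns it into a proof. The inequalities that must hold uniformly for $d\ge2$ and $n\ge7$ are deferred to ``check $d=2$, then monotonicity, or asymptotics plus numerics'', and you flag them yourself as the main obstacle.

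Two ideas close this gap. First, since $0\le Y_t\le1$, one has $Y_t^{n/2}\le Y_t^{7/2}\le(Y_t^3+Y_t^4)/2$ for every $n\ge7$. So all degrees reduce to one $n$-independent majorant $M(t)=\mathbb E(Y_t^3+Y_t^4)/2$, computable from the even moments. Some interpolation of this kind is needed. Your binomial expansion does not apply to the half-integer power, and the cruder bound $Y^{7/2}\le Y^3$ is too weak: at $d=2$, $t=1/3$ it gives $\mathbb E\,Y^3=245/729>1/3$, while $M(1/3)=2063/6561<1/3$. Second, $M$ is convex and non-decreasing with $M(1)=1$, so $t-M(t)$ is concave and vanishes at $t=1$. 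Hence $M(t)\le t$ on the whole of $[t_0,1]$ follows from the single check $M(t_0)\le t_0$, which is an explicit rational function of $d$. On $[-t_0,t_1]$, $-U_2(x)=(1-(d+1)x^2)/d$ decreases in $|x|$ while $M(|x|)$ increases, so only $|x|=\max\{t_0,t_1\}$ has to be checked.

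Several steps of your plan are also wrong as stated.
\begin{itemize}
\item The Step~1 bound does not break down near $u=1$. The derivative of $\mathbb E\,Y_u^{n/2}$ at $u=1$ is $n\,\mathbb E(1-S^2)=n(1-1/d)\ge7/2>1$, and the convexity argument above covers $[t_0,1]$ entirely. So the stitching with $\sum_j(j+\alpha+1)Q_j$ and a cut at $\xi_{n-1}$ is unnecessary. The $Q_j$ argument is itself correct for $u\ge\xi_{n-1}$, but the other half of the stitch is never supplied.
\item The even case is not trivial. The bound $U_n\ge-1$ gives $U_n(x)\ge x$ only at $x=-1$. For even $n$ you need $U_n(u)\ge-u$ on $[t_0,1]$, which is as hard as the odd case.
\item In Step~2, $x^2\le t_1^2$ is false on $[-t_0,t_1]$ whenever $t_0>t_1$, that is, for $2\le d<(1+\sqrt{17})/2$. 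This includes the Legendre case $d=2$, where $t_0=1/3>t_1\approx0.29$.
\item For large $d$ the target $|U_2(t_1)|=2/((\rho-2)(\rho+1)^2)$, with $\rho=\sqrt{d+4}$, is of order $d^{-3/2}$, not $1/d$. So your heuristic scale comparison is off as well.
\end{itemize}
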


\begin{proof}
For $0\le t\le1$, put
\begin{equation}\label{eq:moment-majorant}
Y_t=t^2+(1-t^2)S^2,
\quad
M(t)=\mathbb E\frac{Y_t^3+Y_t^4}{2}.
\end{equation}
By \eqref{eq:integral-representation}, if $|x|=t$, then
\begin{equation}\label{eq:moment-bound}
U_n(x)\ge-\mathbb EY_t^{n/2}\ge-M(t),\quad n\ge7.
\end{equation}
The second inequality follows from
$2Y_t^{7/2}\le Y_t^3+Y_t^4$.

The function $M$ is convex: both $Y_t$ and the function
$u\mapsto(u^3+u^4)/2$ are convex and non-decreasing on $[0,1]$.
Thus $F(t)=t-M(t)$ is concave.  Put $a=d-2$.
The moments in \eqref{eq:integral-representation} give
\begin{equation}\label{eq:concavity-endpoints}
F(1)=0,\quad
F(t_0)=\frac{p(a)}{2(a+3)^7(a+6)(a+8)},
\end{equation}
where
\begin{equation}\label{eq:concavity-polynomial}\begin{aligned}
p(a)={}&2a^8+64a^7+855a^6+6129a^5+25545a^4\\[7pt]
&+62399a^3+83886a^2+50144a+3968.
\end{aligned}\end{equation}
All coefficients in \eqref{eq:concavity-polynomial} are positive.
Since $a\ge0$, \eqref{eq:concavity-endpoints} and concavity give $M(t)\le t$ for
$t_0\le t\le1$.  Taking $x=-t$ proves the
first assertion.

For the second assertion, define
\begin{equation}\label{eq:quadratic-majorant}
G(y)=\frac{1-(d+1)y}{d}-M(\sqrt y),\quad 0\le y\le1.
\end{equation}
The function $G$ decreases: its first term decreases, while
$M(\sqrt y)$ increases. Since
$(1-(d+1)t_0^2)/d=t_0$, \eqref{eq:concavity-endpoints} gives
$G(t_0^2)=F(t_0)>0$.  This is enough when
$t_1\le t_0$.  Suppose that $t_1\ge t_0$ and put
$\rho=\sqrt{d+4}$.  Then $\rho^2-\rho-4\ge0$ and, since
$0\le Y_{t_1}\le1$,
$M(t_1)\le\mathbb E(Y_{t_1}^3)$.
Using \eqref{eq:initial-endpoints} and the moments in
\eqref{eq:integral-representation} gives
\begin{equation}\label{eq:second-moment-endpoint}\begin{aligned}
-U_2(t_1)-\mathbb E(Y_{t_1}^3)
={}&\frac{2}{(\rho-2)(\rho+1)^5(\rho^2-2)}\\[7pt]
&\times\bigl((\rho^2-\rho-4)
(\rho^3+4\rho^2+9\rho+6)+12\rho+20\bigr)>0.
\end{aligned}\end{equation}
Thus $G(t_1^2)>0$ also in this case.  Since $G$ decreases,
$G(y)\ge0$ for
$0\le y\le\max\{t_0^2,t_1^2\}$.  If $-t_0\le x\le t_1$, then
\begin{equation}\label{eq:large-second-interval}
U_n(x)\ge-M(|x|)\ge U_2(x).
\end{equation}
\end{proof}

\begin{proof}[Completion of the proof of Theorem~\ref{thm:main}]
On the first interval $[-1,\xi_1]=[-1,-t_0]$,
Lemma~\ref{lem:large-initial-degrees} proves $U_n\ge U_1$ for $n\ge7$,
and Lemma~\ref{lem:first-cell} in Appendix~\ref{app:initial-degrees}
proves the same inequality for $0\le n\le6$.
On the second interval $[\xi_1,\xi_2]=[-t_0,t_1]$, the other inequality
in Lemma~\ref{lem:large-initial-degrees} and Lemma~\ref{lem:second-cell}
give $U_n\ge U_2$ for every $n\ge0$.
Section~\ref{sec:largest-zero} established the result for $k\ge3$.
This completes the proof.

The endpoint ties follow from \eqref{eq:contiguous-difference}:
$U_{k-1}(\xi_{k-1})=U_k(\xi_{k-1})$ for $k\ge2$, and
$U_k(\xi_k)=U_{k+1}(\xi_k)$. At the remaining endpoint,
$U_n(-1)=(-1)^n$.
\end{proof}

\section{Minimising indices}\label{sec:minimising-indices}

We retain the notation $U_n=R_n^{(\alpha,\alpha)}$ and
$Q_n=R_n^{(\alpha+1,\alpha)}$.
Theorem~\ref{thm:main} gives the following consequence for the least
degree attaining the minimum.

\begin{corollary}\label{cor:least-minimiser}
Let $\alpha\ge0$ and $-1<x<1$. If $k$ is the least index for which
$U_k(x)=\min_{n\ge0}U_n(x)$, then
\begin{equation}\label{eq:least-minimiser-descent}
U_0(x)\ge U_1(x)
\ge\cdots\ge U_k(x).
\end{equation}
\end{corollary}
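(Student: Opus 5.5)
Fix $-1<x<1$. By \eqref{eq:envelope-breakpoints} there is an index $j\ge1$ with $x\in[\xi_{j-1},\xi_j]$. Take $j$ as small as possible, which makes it unique unless $x=\xi_{j-1}$ coincides with an earlier breakpoint. Theorem~\ref{thm:main} gives $U_j(x)=\min_{n\ge0}U_n(x)$. Hence the least minimising index satisfies $k\le j$.

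The descent up to $j$ is already available. The signs \eqref{eq:interval-signs} hold for this $j$, so $Q_i(x)\ge0$ for $0\le i\le j-1$. Since $1-x>0$ and $n+\alpha+1>0$, the contiguous relation \eqref{eq:contiguous-difference} then gives $U_i(x)-U_{i+1}(x)\ge0$ for $0\le i\le j-1$. In other words,
\[
U_0(x)\ge U_1(x)\ge\cdots\ge U_j(x).
\]
Here $x>-1$ matters. At $x=-1$ the interval $[\xi_0,\xi_1]$ includes $-1$, but the corollary excludes that point anyway.

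Since $k\le j$, the chain $U_0(x)\ge\cdots\ge U_k(x)$ is an initial segment of the chain above, so \eqref{eq:least-minimiser-descent} follows at once. We also get $U_k(x)=U_{k+1}(x)=\cdots=U_j(x)$. This is not needed, but it is consistent: every value from $k$ to $j$ equals the minimum.

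The only delicate point is bookkeeping, not analysis. We must check that \eqref{eq:interval-signs} is valid on the closed interval $[\xi_{j-1},\xi_j]$, including its endpoints. We must also confirm that the breakpoints cover $(-1,1)$, which is where $\lim_{n\to\infty}\xi_n=1$ and $\xi_0=-1$ are used. At $x=\xi_{j-1}$ itself, $Q_{j-1}(x)=0$ and $Q_i(x)>0$ for $i<j-1$ by the strict ordering of the largest zeros. So the signs still hold there, and the argument goes through unchanged.
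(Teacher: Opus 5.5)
Your proposal is correct and follows the paper's proof essentially verbatim. You place $x$ in some $[\xi_{j-1},\xi_j]$, use Theorem~\ref{thm:main} to get $k\le j$, and read off $U_0(x)\ge\cdots\ge U_j(x)$ from \eqref{eq:interval-signs} and \eqref{eq:contiguous-difference}. One small remark is off: the argument does not need $x>-1$ but $x<1$, which gives $1-x>0$ and guarantees that some $j$ with $x\le\xi_j$ exists; this does not affect the proof.
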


\begin{proof}
By \eqref{eq:envelope-breakpoints}, choose $j\ge1$ with
$x\in[\xi_{j-1},\xi_j]$.
Theorem~\ref{thm:main} shows that the minimum is attained at degree $j$,
so $k$ exists and $k\le j$.
The signs in \eqref{eq:interval-signs} and the contiguous relation
\eqref{eq:contiguous-difference} give
$U_0(x)\ge U_1(x)\ge\cdots\ge U_j(x)$, and hence
\eqref{eq:least-minimiser-descent}.
\end{proof}

The qualification \emph{least} matters when several degrees attain the
same minimum. In Question~1 of his thesis, de Oliveira Filho asked the
following \cite[Section~3.5d, p.~47]{OliveiraFilho2009}; Koornwinder
reproduced it in \cite[Topic~11, p.~21]{Koornwinder2010}.
\begin{question}[de Oliveira Filho's Question 1]\label{prob:first}
Let $\alpha\ge0$, $-1<x<1$, and let $k\ge0$ be an integer satisfying
\begin{equation}\label{eq:arbitrary-minimiser}
U_k(x)=\min_{n\ge0}U_n(x).
\end{equation}
Must $(U_n(x))_{n=0}^k$ be decreasing?
\end{question}

After replacing de Oliveira Filho's notation by $U$, this is Question~1 of
the thesis; Koornwinder added the parenthetical ``such $k$ exists''.
Since $U_1(x)=x<1$, every minimising index is positive.
We interpret $k$ as an arbitrary index satisfying
\eqref{eq:arbitrary-minimiser}, and ``decreasing'' as non-increasing. The strict rise below makes
the latter convention immaterial.

In his Remark~1, Koornwinder used \eqref{eq:contiguous-difference} to claim that every minimising degree $k$ must satisfy
\begin{equation}\label{eq:minimiser-interval}
\xi_{k-1}\le x\le\xi_k.
\end{equation}
Under \eqref{eq:minimiser-interval}, interlacing and
\eqref{eq:contiguous-difference} show that
$(U_n(x))_{n=0}^k$ is non-increasing. This
interval condition is not necessary for an arbitrary minimising degree,
as the Legendre case $\alpha=0$ shows.
In this case, write $P_n=U_n=P_n^{(0,0)}$.
At $x=-1/3$, direct
evaluation gives
\begin{equation}\label{eq:legendre-counterexample}\begin{aligned}
P_0(-1/3)&=1,\quad
P_1(-1/3)=P_2(-1/3)=P_5(-1/3)=-\frac13,\\[7pt]
P_3(-1/3)&=\frac{11}{27},\quad P_4(-1/3)=\frac1{81},\quad
P_6(-1/3)=\frac{47}{243}.
\end{aligned}\end{equation}
Taking $d=2$ in \eqref{eq:moment-bound}, we obtain
\begin{equation}\label{eq:legendre-tail-bound}
P_n(-1/3)\ge-M(1/3)=-\frac{2063}{6561}>-\frac13,
\quad n\ge7.
\end{equation}
Thus the global minimum is $-1/3$, attained at degrees $1$, $2$, and $5$.
Taking the admissible minimising degree $k=5$, we have
$P_2(-1/3)<P_3(-1/3)$, which gives a negative answer to Question~\ref{prob:first} as written.
Here the least minimising degree is $1$, so the example does not refute the
modified statement in which the least minimising degree is chosen.

The same example also violates \eqref{eq:minimiser-interval}:
\begin{equation}\label{eq:jacobi-counterexample}
R_1^{(1,0)}(x)=\frac{1+3x}{4},\quad
R_4^{(1,0)}(1/2)=-\frac{97}{640}<0<R_4^{(1,0)}(1)=1.
\end{equation}
By \eqref{eq:jacobi-counterexample}, $\xi_1=-1/3<1/2<\xi_4$, so the same degree $k=5$ does not satisfy
$-1/3\in[\xi_4,\xi_5]$.

At a minimising degree, \eqref{eq:contiguous-difference} gives only
\begin{equation}\label{eq:minimiser-signs}
Q_{k-1}(x)\ge0
\quad\text{and}\quad Q_k(x)\le0.
\end{equation}
In the example, these values are $7/135$ and $-16/243$.  The second sign
implies $x\le\xi_k$, but the first does not imply $x\ge\xi_{k-1}$: a
Jacobi polynomial may also be positive to the left of its largest zero.
Thus these two signs do not imply the interval condition.  On $[\xi_{k-1},\xi_k]$, however,
\eqref{eq:contiguous-difference} gives
\begin{equation}\label{eq:initial-descent}
U_0(x)\ge\cdots\ge
U_k(x)\le U_{k+1}(x).
\end{equation}
When $\alpha>0$, the interval condition is also necessary, and
Question~\ref{prob:first} has an affirmative answer.

\begin{proposition}\label{prop:positive-minimisers}
Let $\alpha>0$ and $-1<x<1$. The minimum
$\min_{n\ge0}U_n(x)$ is attained at degree $k\ge1$ if and only if
\begin{equation}\label{eq:positive-minimiser-characterisation}
\xi_{k-1}\le x\le\xi_k.
\end{equation}
The minimum is attained only at degree $k$ when
$\xi_{k-1}<x<\xi_k$, and only at degrees $k,k+1$ when $x=\xi_k$.
For every minimising degree $k$,
$U_0(x)\ge U_1(x)\ge\cdots\ge U_k(x)$.
\end{proposition}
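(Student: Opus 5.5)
The plan is to prove a strict form of Theorem~\ref{thm:main} for $\alpha>0$: if $\xi_{k-1}<x<\xi_k$, then $U_m(x)>U_k(x)$ for every $m\ne k$. Everything else then follows from \eqref{eq:envelope-breakpoints} and the endpoint ties.
\begin{itemize}
\item Given $-1<x<1$, either $x$ lies in a unique open interval $(\xi_{j-1},\xi_j)$, where $j$ is then the only minimiser, or $x=\xi_j$ for a unique $j\ge1$.
\item In the second case, applying the strict statement on both neighbouring intervals and letting $x\to\xi_j$ gives only the non-strict inequality $U_m(\xi_j)\ge U_j(\xi_j)$. So for $x=\xi_j$ I would instead rerun the argument below at the endpoint. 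There $Q_j(x)=0$ and $Q_{j-1}(x)>0$, so strictness has to come from the $b_j$ term for $m\ge j+2$.
\item Together with the ties $U_j(\xi_j)=U_{j+1}(\xi_j)$, this shows that the minimisers at $\xi_j$ are exactly $j$ and $j+1$.
\item The equivalence with \eqref{eq:positive-minimiser-characterisation} is then immediate.
\item The monotonicity claim follows from \eqref{eq:initial-descent}, since every minimising degree now satisfies the interval condition.
\end{itemize}

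\textbf{Degrees below $k$.} For $m<k$, strictness is easy. For $x>\xi_{k-1}$, every $Q_j(x)$ with $j\le k-1$ is strictly positive, because $x$ lies strictly to the right of all zeros of $Q_j$. Hence \eqref{eq:contiguous-difference} gives $U_0(x)>\cdots>U_k(x)$.

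\textbf{Degrees above $k$, with $k\ge3$.} I would use \eqref{eq:representation-difference}. On the open interval, $Q_k(x)<0<Q_{k-1}(x)$, and both terms are already non-negative by Section~\ref{sec:largest-zero}. It therefore suffices to show that $W_{N}^{(c,\lambda)}(x)<1$ strictly for $N\ge1$, $c\in\{k,k+1\}$ and $x<1$ in the admissible angle range. The case $N=1$ is \eqref{eq:W-first-difference}.

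Here $\alpha>0$, that is, $\lambda>1/2$, enters. Since $a(1-a)<1/4$ for $a\ne1/2$, the comparison \eqref{eq:monic-coefficient-order} becomes strict: $\eta_m(1/2)>\eta_n(\lambda)$. Feeding this into \eqref{eq:connection-nonnegative-step}, I would show by induction that $c_{N,j}>0$ for enough indices $j$, in particular for $j=1$ when $N\ge1$. The mechanism is that the middle term $(\eta_{j+1}(1/2)-\eta_n(\lambda))c_{n,j+1}$ is strictly positive whenever $c_{n,j+1}>0$, and the term $\beta_0c_{n,0}$ couples the two parities. Consequently $d_{N,1}>0$ in Proposition~\ref{prop:positive-connection}. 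Since $W_1^{(K,1/2)}(x)<1$ for $x<1$ and all other terms are $\le1$ by Lemma~\ref{thm:Legendre-plateau}, the convex combination is strictly below $1$.

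The Legendre example shows this must fail at $\lambda=1/2$, where the connection is the identity. This is consistent with the argument, since the strictness comes entirely from $\lambda>1/2$.

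\textbf{The intervals $k=1,2$.} Here I would revisit Section~\ref{sec:first-intervals}.
\begin{itemize}
\item For $n\ge7$, concavity of $F$ with $F(t_0)>0$ and $F(1)=0$ gives $M(t)<t$ strictly for $t_0\le t<1$.
\item Similarly, $G$ is strictly positive on the relevant range.
\item Inequality \eqref{eq:moment-bound} is strict for $|x|<1$ once $S^2<1$ with positive probability.
\end{itemize}
The finitely many degrees $n\le6$ would be rechecked from the explicit polynomials in the appendix, now with $a=d-2>0$ and aiming for strict inequalities.

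\textbf{Main obstacle.} I expect the hardest step to be the strict positivity of the connection coefficient $c_{N,1}$ for every $N$. The induction \eqref{eq:connection-induction} only propagates non-negativity, so one has to track a chain of indices along which strict positivity is inherited. If $j=1$ is awkward, I would try $j=0$ instead, using $W_0^{(K,1/2)}=1$ together with strictness of $q_{j}<1$ for some other $j\le N$ carrying positive weight.
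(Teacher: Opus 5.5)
Your strategy matches the paper's proof. You use the strict comparison $\eta_m(1/2)>\eta_n(\lambda)$ from $\lambda>1/2$, the seed $c_{2,0}=\eta_1(1/2)-\eta_1(\lambda)>0$, the parity coupling $c_{n+1,0}\ge\beta_0c_{n,0}$, and then $c_{N,1}\ge c_{N-1,0}>0$. So $d_{N,1}>0$, and $W_1^{(K,1/2)}(x)<1$ forces $W_N^{(c,\lambda)}(x)<1$. At $x=\xi_k$ strictness comes from the $b_k$ term, and the first two intervals are handled by strict versions of Section~\ref{sec:first-intervals} and the appendix.

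\textbf{The step that fails.} Your claim that $c_{N,1}>0$ for every $N\ge1$ is false at $N=2$. Taking $j=n$ in \eqref{eq:connection-recurrence} gives $c_{n+1,n}=c_{n,n-1}$, hence $c_{N,N-1}=c_{1,0}=0$ for all $N\ge1$. (Both monic families have the same diagonal coefficients, so $p_N-q_N$ has degree at most $N-2$.) Thus $c_{2,1}=d_{2,1}=0$. Your middle-term mechanism does not reach this coefficient, since it requires $j<n$. Consequently $W_2^{(c,\lambda)}=d_{2,0}+d_{2,2}W_2^{(K,1/2)}$. Your fallback does not help as phrased: the weight on $W_0^{(K,1/2)}=1$ produces no strictness, and Lemma~\ref{thm:Legendre-plateau} only gives $W_2^{(K,1/2)}\le1$.

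\textbf{Why it matters and how to fix it.} This case cannot be bypassed. At $x=\xi_k$ the $a_k$ term in \eqref{eq:representation-difference} vanishes, so excluding $m=k+3$ as a minimiser requires exactly $W_2^{(k+1,\lambda)}(\xi_k)<1$. The paper closes it by direct computation from \eqref{eq:W-recurrence}:
\[
1-W_2^{(c,\lambda)}(x)=\frac{2(c+\lambda+1)(1-x)\bigl(2(c+\lambda)x+c+2\lambda\bigr)}{(c+2\lambda)(c+2\lambda+1)},
\]
which is positive for $0\le x<1$. With this patch your argument goes through.

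What remains are the low-degree strict checks, which you only sketch. There $d>2$ is genuinely needed. For example, in Lemma~\ref{lem:first-cell} the last factor of $G_5$ at $t=t_0$ equals $(d-2)(d-1)(d+2)/(d+1)^2$. This vanishes at $d=2$, matching the Legendre tie $P_5(-1/3)=P_1(-1/3)$. These checks also confirm that the only ties are degrees $1,2$ at $-t_0$ and $2,3$ at $t_1$.
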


\begin{proof}
Under the hypotheses of Corollary~\ref{cor:auxiliary-upper-bound},
write $x=\cos\theta$ and suppose that $\lambda>1/2$ and $x<1$.
We first prove that $W_N^{(c,\lambda)}(x)<1$ for every $N\ge1$.
With the notation of Proposition~\ref{prop:positive-connection},
\eqref{eq:monic-coefficients} gives
$c_{2,0}=\eta_1(1/2)-\eta_1(\lambda)>0$.
Since $\beta_0>0$, \eqref{eq:connection-nonnegative-step} gives
$c_{n+1,0}\ge\beta_0c_{n,0}$ for $n\ge2$, so $c_{N,0}>0$ for $N\ge2$.
By \eqref{eq:connection-induction},
$c_{N,1}\ge c_{N-1,0}>0$ for $N\ge3$.
Thus $d_{N,1}>0$ in \eqref{eq:positive-connection}.
Since $W_1^{(K,1/2)}(x)<1$, Lemma~\ref{thm:Legendre-plateau}
and the positive connection formula give $W_N^{(c,\lambda)}(x)<1$
for $N\ge3$. The case $N=1$ follows from \eqref{eq:W-recurrence}.
For $N=2$, the same recurrence gives
\begin{equation}\label{eq:strict-second-auxiliary}
1-W_2^{(c,\lambda)}(x)
=\frac{2(c+\lambda+1)(1-x)
\bigl(2(c+\lambda)x+c+2\lambda\bigr)}
{(c+2\lambda)(c+2\lambda+1)}>0.
\end{equation}

Let $k\ge3$ and $x\in[\xi_{k-1},\xi_k]$.
As in Section~\ref{sec:largest-zero}, the bound applies with $c=k$
and $c=k+1$. Substitution in \eqref{eq:representation-difference}
gives $U_m(x)>U_k(x)$ for $m\ge k+2$, and for $m=k+1$ unless $x=\xi_k$.
Interlacing and \eqref{eq:contiguous-difference} show that the preceding
degrees are also larger than $U_k(x)$, except for $U_{k-1}=U_k$
at $x=\xi_{k-1}$.

For the first two intervals, put $d=2\alpha+2>2$.
The proof of Lemma~\ref{lem:large-initial-degrees} gives $F(t)>0$
for $t_0\le t<1$, by concavity and \eqref{eq:concavity-endpoints},
and $G(x^2)>0$ throughout the second interval.
Thus the comparisons are strict for $n\ge7$.
For degrees at most six, we use Appendix~\ref{app:initial-degrees}.
In the first interval, \eqref{eq:first-fourth-parameter-derivative}
gives $h_d>h_2\ge0$, the last factor for degree five is positive
by \eqref{eq:first-fifth-endpoint}, and the estimates for degree six
are strict. In the second interval, the comparisons for degrees four
and six are strict. For degree five, we use
\eqref{eq:second-fifth-endpoints} when $x\ge0$. When $x<0$,
\eqref{eq:second-fifth-negative-endpoint} is strict for $2<d\le3$,
and \eqref{eq:second-fifth-negative-large-d} is strict for $d>3$.
The remaining factorisations give equality only between degrees $1,2$
at $-t_0$ and degrees $2,3$ at $t_1$.
This proves the assertions about the minimising degrees.
The final assertion follows from \eqref{eq:initial-descent}.
\end{proof}

\appendix
\section{Two exceptional auxiliary degrees}\label{app:exceptional}

This appendix supplies the two finite-degree estimates used in
Lemma~\ref{thm:Legendre-plateau}.

\begin{lemma}\label{lem:exceptional-auxiliary-degrees}
Let $0<K\le6$ and write $q_n(x)=W_n^{(K,1/2)}(x)$. Then
$q_4(x)\le1$ for $0\le x\le1$, and $q_5(x)\le1$ for
$0\le x\le1/2$.
\end{lemma}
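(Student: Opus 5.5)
We will compute $q_4$ and $q_5$ explicitly from the recurrence \eqref{eq:legendre-W-recurrence}. Write $y=1-x$. Since $q_n(1)=1$, each $1-q_n$ vanishes at $y=0$, so $1-q_n(x)=y\,g_n(y)$, where $g_n$ is a polynomial in $y$ whose coefficients are rational functions of $K$. The claim then reduces to showing $g_4(y)\ge0$ for $0\le y\le1$, and $g_5(y)\ge0$ for $1/2\le y\le1$, uniformly in $0<K\le6$.

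The first step is to obtain $g_n$ in a usable form. Starting from $q_{-1}=q_0=1$ and $q_1=1-\frac{2K+1}{K+1}y$, we iterate the recurrence in the variable $y$, with $x=1-y$, and clear the denominators $(K+1)(K+2)\cdots(K+n)$. This gives
\[
(K+1)\cdots(K+n)\,(1-q_n)=y\,P_n(K,y),
\]
with $P_n$ a polynomial in two variables. A useful cross-check comes from \eqref{eq:rotation-bound}: it already gives $q_4\le1$ whenever $5\theta\le2\pi$, and $q_5\le1$ whenever $6\theta\le2\pi$. Consequently the only genuinely new region is the one where $\theta$ is large, that is, where $x$ is near $0$ for $q_4$ and near $0$ to $1/2$ for $q_5$. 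Note also that at $x=0$ odd $q_n$ are not automatically zero, because $q_{-1}=1$ breaks parity.

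For nonnegativity of $P_n$ on the required rectangle, the first attempt is direct: expand in powers of $y$, or of $x$, and check whether all coefficients have a definite sign for $K>0$. For $q_4$ on $0\le x\le1$, we would expand in $x$ and $K$ and group terms. In particular, the highest-order part $-\tfrac{(2K+1)(2K+3)(2K+5)(2K+7)}{(K+1)\cdots(K+4)}x^4$ contributes positively to $1-q_4$, which suggests that $1-q_4$ is the sum of $(1-x)$ times a positive combination. If a fully positive expansion fails, the fallback is as follows.

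\begin{enumerate}
\item Substitute $K=6s/(1+s)$ or $K=6u$ with $0<u\le1$, and $x=1-y$ with $y\in[0,1]$ (respectively $y\in[1/2,1]$, written as $y=\tfrac12+\tfrac12 v$ with $v\in[0,1]$).
\item Seek a representation of the numerator with nonnegative coefficients in the Bernstein basis on $[0,1]^2$.
\item Where Bernstein coefficients are not all positive, degree-elevate or subdivide the rectangle, for example splitting $u$ at $1/2$.
\end{enumerate}

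The main obstacle is $q_5$ on $0\le x\le1/2$ near $x=0$ for small $K$, where $q_5(0)$ might approach $1$. First we will compute $q_5(0)$ exactly in $K$; from the recurrence, $q_1(0)=0$, $q_2(0)=-\tfrac{K+1}{K+2}$, and so on. We then verify it stays well below $1$, and after that bound the $x$-derivative on $[0,1/2]$ crudely to propagate the bound. If that is too lossy, the Bernstein subdivision in $(u,v)$ is the robust route. It is finite, mechanical, and we expect it to succeed after at most a couple of subdivisions, since the margin in $1-q_5$ on this strip should be of order one.
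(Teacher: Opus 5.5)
The reduction you describe is the same as the paper's. You clear the denominators $(K+1)\cdots(K+n)$, factor out $1-x$ (the paper also factors out $2K+5$), and reduce to positivity of the cofactor $H_n(K,x)$ for $0<K\le6$ on the relevant $x$-range. But that positivity is the whole content of the lemma, and your proposal does not supply it. You never compute the cofactor, and you list certification procedures (coefficient signs, Bernstein coefficients with subdivision, derivative propagation) without carrying any of them out.

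The heuristics you use to predict that this will be routine are also wrong:
\begin{itemize}
\item The direct sign check fails. In $H_4=h_3K^3+h_2K^2+h_1K+h_0$ the leading coefficient $h_3=2x(4x^2+2x-1)$ is negative for $0<x<\cos(2\pi/5)$. The leading $K$-coefficient of $H_5$ likewise changes sign on $[0,1/2]$.
\item You place the main obstacle at $q_5$ near $x=0$ for small $K$. In fact $q_5(0)=-\frac{K(K+2)(K+4)}{(K+1)(K+3)(K+5)}\le0$, so nothing happens there. Also $q_1(0)=-K/(K+1)$, not $0$.
\item The tight spot is $q_4$ at the endpoint $K=6$ near $x=1/7$. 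There $q_4(1/7)=167611/168070$, so $1-q_4\approx2.7\cdot10^{-3}$. Already for $K=6.1$ one gets $q_4(1/7)>1$. So the margin is not ``of order one'', and the hypothesis $K\le6$ is essentially sharp.
\item A Bernstein subdivision would terminate in principle, since the cofactor is strictly positive on the closed box. But it needs a fine mesh near $(K,x)=(6,1/7)$ and has to actually be executed.
\item Propagating from $x=0$ with one derivative bound cannot handle $q_5$. At $K=6$, $q_5(0)\approx-0.69$ while $q_5(0.39)\approx0.87$. So any bound $M$ for $q_5'$ on $[0,1/2]$ exceeds $3.9$, and then $q_5(0)+M/2>1$.
\end{itemize}

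The paper's verification rests on an idea your proposal lacks: regard $H_n$ as a polynomial in $K$. On the relevant $x$-range every $K$-coefficient except the leading one is positive. If the leading one is non-negative there is nothing to prove. If it is negative, the $K$-derivative changes sign exactly once on $K>0$. So $K\mapsto H_n(K,x)$ increases and then decreases, and its minimum over $[0,6]$ is attained at $K=0$ or $K=6$.

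This reduces the lemma to univariate checks. For $H_4$ these are $H_4(0,x)=3(x+1)(7x^2+1)>0$ and $H_4(6,x)=3f(x)>0$. The second follows by expanding around the near-zero minimum with $u=7x-1$, which gives $343f=1235u^3+8360u^2+30u+108$. The analogous checks are needed for $H_5$ at $K=0$ and $K=6$. Without this reduction, or a fully executed certificate in its place, the proposal does not yet prove the statement.
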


\begin{proof}
The defining recurrence \eqref{eq:W-recurrence} gives
\begin{equation}\label{eq:exceptional-factorisations}\begin{aligned}
1-q_4(x)&=\frac{(2K+5)(1-x)H_4(K,x)}
{(K+1)(K+2)(K+3)(K+4)},\\[7pt]
1-q_5(x)&=\frac{(2K+5)(1-x)H_5(K,x)}
{(K+1)(K+2)(K+3)(K+4)(K+5)}.
\end{aligned}\end{equation}
\smallskip
\noindent\emph{Degree $4$.}
Write
\begin{equation}\label{eq:H4-expansion}
H_4(K,x)=h_3K^3+h_2K^2+h_1K+h_0.
\end{equation}
The four coefficients are
\begin{equation}\label{eq:H4-coefficients}\begin{aligned}
h_3&=2x(4x^2+2x-1),\\[7pt]
h_2&=44x^3+24x^2-9x+1,\\[7pt]
h_1&=62x^3+41x^2-7x+4,\\[7pt]
h_0&=21x^3+21x^2+3x+3.
\end{aligned}\end{equation}
The last three polynomials are positive for $x\ge0$.  Indeed,
\begin{equation}\label{eq:H4-positive-coefficients}\begin{aligned}
h_2&=44x^3+(24x^2-9x+1),\\[7pt]
h_1&=62x^3+(41x^2-7x+4),\\[7pt]
h_0&=3(x+1)(7x^2+1),
\end{aligned}\end{equation}
and the two quadratic factors have negative discriminants.  If
$h_3\ge0$, then $H_4(K,x)>0$.  If $h_3<0$, the two zeros of the derivative
of $H_4(K,x)$ with respect to $K$ have opposite signs.  Hence $H_4(K,x)$ first
increases and then decreases, and its minimum on $0\le K\le6$ occurs at
an endpoint.  Now $H_4(0,x)=h_0>0$ and
\begin{equation}\label{eq:H4-at-six}
H_4(6,x)=3f(x),
\quad f(x)=1235x^3+665x^2-265x+21.
\end{equation}
Put $u=7x-1$.  Then $u\ge-1$ and
\begin{equation}\label{eq:H4-shifted}
343f(x)=1235u^3+8360u^2+30u+108.
\end{equation}
This is positive if $u\ge0$.  If $u=-t$, $0\le t\le1$, then
\begin{equation}\label{eq:H4-negative-shift}
343f(x)=t^2(8360-1235t)+108-30t
\ge7125t^2+78>0.
\end{equation}
Thus $H_4(K,x)>0$.

\medskip

\noindent\emph{Degree $5$.}
Assume $0\le x\le1/2$.  Write
$H_5(K,x)=K^4g_4+K^3g_3+K^2g_2+Kg_1+g_0$, where
\begin{equation}\label{eq:H5-coefficients}\begin{aligned}
g_4&=(2x-1)(2x+1)(4x^2+2x-1),\\[7pt]
g_3&=160x^4+84x^3-76x^2-19x+11,\\[7pt]
g_2&=3g_3+40x^4+46x^3+2x^2-x+8,\\[7pt]
g_1&=3g_3+120x^4+159x^3+19x^2-5x+25,\\[7pt]
g_0&=g_3+29x^4+105x^3+55x^2-2x+13.
\end{aligned}\end{equation}
Put $v=5x-2$.  Since $-2\le v\le1/2$,
\begin{equation}\label{eq:H5-g3-shift}
125g_3=32v^4+340v^3+892v^2+37v+89>0.
\end{equation}
Indeed, the sign is clear for $v\ge0$; if $v=-t$, $0\le t\le2$, the
right-hand side is at least $32t^4+212t^2+15$.  The formulas in \eqref{eq:H5-coefficients}
then show that $g_0,g_1,g_2$ are also positive.  If $g_4\ge0$, then
$H_5(K,x)>0$.  If $g_4<0$, then, for $K>0$,
\begin{equation}\label{eq:H5-K-derivative}
K^{-3}\frac{\partial H_5(K,x)}{\partial K}
=4g_4+\frac{3g_3}{K}+\frac{2g_2}{K^2}+\frac{g_1}{K^3}
\end{equation}
decreases strictly from $+\infty$ to $4g_4<0$.  Hence $H_5(K,x)$ increases
and then decreases with $K$, so its minimum on $0\le K\le6$ is attained
at $K=0$ or $K=6$.  The first value is $g_0>0$, while
\begin{equation}\label{eq:H5-at-six}
H_5(6,x)=3p(x),
\quad
p(x)=25935x^4+13965x^3-12065x^2-3059x+1840.
\end{equation}
The condition $g_4<0$ gives $1/4\le x\le1/2$, so
$-3/4\le v\le1/2$.  In terms of $v$,
\begin{equation}\label{eq:H5-shifted}
125p(x)=5187v^4+55461v^3+147953v^2+15789v+30462.
\end{equation}
This is positive if $v\ge0$.  If $v=-t$, $0\le t\le3/4$, then
\begin{equation}\label{eq:H5-negative-shift}
125p(x)=5187t^4+t^2(147953-55461t)+30462-15789t>0.
\end{equation}
Thus $H_5(K,x)>0$.

The claimed bounds now follow from \eqref{eq:exceptional-factorisations}:
the denominators are positive and $1-x\ge0$ on the respective intervals.
\end{proof}

\section{Low degrees in the first two intervals}\label{app:initial-degrees}

We complete the comparisons of Section~\ref{sec:first-intervals}
for degrees at most six, using only the recurrence
\eqref{eq:ultraspherical-recurrence}. Throughout,
$d=2\alpha+2\ge2$, $t_0=1/(d+1)$, and
$t_1=1/(1+\sqrt{d+4})$, as in \eqref{eq:initial-thresholds}.

\begin{lemma}\label{lem:first-cell}
For $-1\le x\le-t_0$ and every integer $0\le n\le6$,
\begin{equation}\label{eq:first-interval-bound}
U_n(x)\ge U_1(x)=x.
\end{equation}
\end{lemma}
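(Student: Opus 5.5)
We argue degree by degree, using explicit formulas for $U_n$, $0\le n\le6$, obtained from \eqref{eq:ultraspherical-recurrence} with $\lambda=(d-1)/2$. The cases $n=0,1$ are trivial. For $n=2$, \eqref{eq:contiguous-difference} gives $U_1-U_2=\frac{(d+1)(1-x)}{2(\alpha+1)}Q_1(x)\cdot\frac{\alpha+1}{\alpha+1}$ up to a positive factor, and $Q_1(x)=R_1^{(\alpha+1,\alpha)}(x)$ is a positive multiple of $x+t_0$, so $U_2-U_1\ge0$ for $x\le-t_0$. Concretely,
\[
U_2(x)-x=\frac{((d+1)x+1)(x-1)}{d},
\]
and both factors are non-positive on $[-1,-t_0]$.

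For $3\le n\le6$, we write $x=-t$ with $t_0\le t\le1$ and factor
\[
U_n(-t)+t=(1-t)\,\frac{H_n(d,t)}{D_n(d)},
\]
where $D_n(d)>0$ is the product of the denominators $(d+j)$ produced by the recurrence. The factor $1-t$ appears because $U_n(-1)=(-1)^n\ge -1$ with equality for odd $n$. When $n$ is odd, the root at $t=1$ makes the factor $1-t$ divide the expression, since $U_n(-1)+1=0$. When $n$ is even, $U_n(-1)+1=2$, so there is no such root. In that case we instead prove $U_n(-t)+t\ge0$ directly.

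For even $n$ ($n=4,6$), we split at the sign of $U_n$. Where $U_n(-t)\ge0$ there is nothing to prove. Otherwise we use the fact that $U_n(-t)=U_n(t)$ is a polynomial in $t^2$: setting $s=t^2$, we show that $t+U_n(t)\ge0$ on $[t_0,1]$ by writing it as a polynomial in $a=d-2\ge0$ and a shifted variable $u=(d+1)t-1\ge0$. We then check coefficient positivity, or use a convexity/endpoint argument in $t$ in the style of Lemma~\ref{lem:exceptional-auxiliary-degrees}. For odd $n$ ($n=3,5$), we show $H_n(d,t)\ge0$ in the same way. For $n=3$ the situation is explicit: $U_3(x)=x\bigl((d+3)x^2-3\bigr)/d$, so $U_3(-t)+t=t\bigl(d+3-(d+3)t^2\bigr)/d=\frac{(d+3)t(1-t^2)}{d}\ge0$. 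Degree $5$ is handled similarly after factoring out $t(1-t)$, leaving a quadratic in $t^2$ whose positivity on $[t_0,1]$ follows from its value at $t=t_0$ and monotonicity.

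The main obstacle is degree $6$, and to a lesser extent degree $4$. Here $U_n(-t)+t$ does not factor nicely, and the minimum over $t$ can lie in the interior near $t_0$, where the margin is smallest, especially for $d=2$. We would first try to bound $U_6$ from below by its minimum value. Its critical points are the zeros of $U_6'$, which is a multiple of an ultraspherical polynomial of degree $5$ with parameter $\lambda+1$. The local minima of $U_6$ have value at least $-c(d)$, and we would compare this with $-t_0$. If that is too crude, we would fall back to the substitution $u=(d+1)t-1$, $a=d-2$, and verify positivity of all coefficients of the resulting bivariate polynomial, after splitting $t$ into $[t_0,1/2]$ and $[1/2,1]$ if necessary. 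On $[1/2,1]$, the bound $U_n\ge-M(t)$ type estimates from Lemma~\ref{lem:large-initial-degrees} need not apply for small $n$, so an explicit computation is needed there too.
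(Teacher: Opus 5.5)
\paragraph{Verdict.} Your proposal has a genuine gap: degrees $n=4$ and $n=6$ are not proved, and the missing idea is monotonicity in $d$.

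\paragraph{What is correct.} Your cases $n\le 3$ agree with the paper. With $G_n(t)=U_n(-t)+t$ one has $G_2=(1+t)\bigl((d+1)t-1\bigr)/d$ and $G_3=(d+3)t(1-t^2)/d$. Your plan for $n=5$ is also the right one, but you should write out the factorisation
$G_5=t(d+5)(1-t^2)\bigl((d+7)t^2+d-3\bigr)/(d(d+2))$.
Its last factor is linear (not quadratic) in $t^2$, is increasing, and equals $(d-2)(d-1)(d+2)/(d+1)^2\ge0$ at $t=t_0$. This is the tight case, not $n=6$: for $d=2$ it vanishes at $t_0$, i.e.\ $P_5(-1/3)=-1/3$.

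\paragraph{The gap at $n=4$ and $n=6$.} Here you only list candidate strategies, and the ones you name fail as stated.
\begin{itemize}
\item Coefficient positivity in $(a,u)$ fails already at $a=0$. At $d=2$, with $t=(1+u)/3$, one finds $648\,G_4=(u+4)(35u^3-60u+56)$, whose coefficient of $u$ is $-184$.
\item Also at $d=2$, $48\,G_6=3(1+t)f(t)$ with $243f=231u^5+462u^4-1218u^3-1848u^2+2352u+1536$, which has negative $u^2$ and $u^3$ coefficients. Splitting the $t$-range is not worked out.
\item Comparing the global minimum of $U_6$ with $-t_0$ fails at $d=2$: $\min_{[-1,1]}P_6\approx-0.415<-1/3$. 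That minimum sits near $|x|\approx0.83$, where the required bound $-|x|$ is much weaker.
\end{itemize}
So you need a pointwise argument on the two-parameter region $\{d\ge2,\ t_0\le t\le1\}$, and the proposal supplies none.

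\paragraph{The missing idea: monotonicity in $d$.}
\begin{itemize}
\item For $n=4$ the paper writes $G_4=(1+t)h_d(t)/(d(d+2))$ with $h_d(t)=(d+3)(d+5)(t^3-t^2)+(d-1)(d+3)t+3$. Since $t(1-t)\le1/4$, one gets $\partial h_d/\partial d=t\bigl(2d+2-(2d+8)t(1-t)\bigr)\ge0$. Hence $h_d\ge h_2=3+5t-35t^2(1-t)$, which is positive on all of $[0,1]$ after a split at $t=4/5$.
\item For $n=6$, set $N_6=d(d+2)(d+4)G_6$. Then $\partial N_6/\partial d=t(1+t)\bigl(a^2A(t)+aB(t)+C(t)\bigr)$ with $A,B,C>0$ on $[0,1]$.
\item For $t\ge1/3$ this reduces everything to the Legendre case $d=2$, a one-variable check.
\item For $t\le1/3$, the constraint $t\ge t_0$ is used in the form $d\ge 1/t-1$. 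It then suffices to evaluate on the curve $d=1/t-1$, where $N_6$ factors as a product of two negative factors.
\end{itemize}
An argument of this kind, or some other complete reduction of the two-parameter inequalities to one-variable checks, is what your proposal lacks for $n=4$ and $n=6$.
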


\begin{proof}
Write $x=-t$ and put $G_n(t)=U_n(-t)+t$.
The recurrence \eqref{eq:ultraspherical-recurrence} gives
\begin{equation}\label{eq:first-low-degree-differences}\begin{aligned}
G_0&=1+t,\quad G_1=0,\quad
G_2=\frac{(1+t)((d+1)t-1)}d,\\[7pt]
G_3&=\frac{t(d+3)(1-t^2)}d,\\[7pt]
G_5&=\frac{t(d+5)(1-t^2)
\bigl((d+7)t^2+d-3\bigr)}{d(d+2)}.
\end{aligned}\end{equation}
The signs of $G_0,G_1,G_2,G_3$ in \eqref{eq:first-low-degree-differences} are immediate.  For $G_5$, the sign is
clear when $d\ge3$.  If $2\le d\le3$, the last factor is increasing in
$t$ and its value at $1/(d+1)$ is
\begin{equation}\label{eq:first-fifth-endpoint}
\frac{(d-2)(d-1)(d+2)}{(d+1)^2}.
\end{equation}
For $n=4$, we have
\begin{equation}\label{eq:first-fourth-factor}
G_4(t)=\frac{(1+t)h_d(t)}{d(d+2)},
\end{equation}
where
\begin{equation}\label{eq:first-fourth-polynomial}
h_d(t)=(d+3)(d+5)(t^3-t^2)+(d-1)(d+3)t+3.
\end{equation}
Since $t(1-t)\le1/4$,
\begin{equation}\label{eq:first-fourth-parameter-derivative}
\frac{\partial h_d}{\partial d}
=t\bigl(2d+2-(2d+8)t(1-t)\bigr)\ge0,
\end{equation}
we have $h_d\ge h_2$, where
\begin{equation}\label{eq:first-fourth-at-two}
h_2(t)=3+5t-35t^2(1-t).
\end{equation}
If $t\le4/5$, then $h_2(t)\ge3-15t/4\ge0$.  If $t\ge4/5$, then
$h_2(t)\ge3+5t-7t^2>0$; the last quadratic is concave and positive
at both endpoints.

Finally, put $N_6(d,t)=d(d+2)(d+4)G_6(t)$.  The recurrence \eqref{eq:ultraspherical-recurrence} gives
\begin{equation}\label{eq:first-sixth-polynomial}\begin{aligned}
N_6(d,t)={}&(d+5)(d+7)(d+9)t^6
-15(d+5)(d+7)t^4\\[7pt]
&+45(d+5)t^2-15+d(d+2)(d+4)t.
\end{aligned}\end{equation}
Differentiating \eqref{eq:first-sixth-polynomial} gives
\begin{equation}\label{eq:first-sixth-parameter-derivative}
\frac{\partial N_6(d,t)}{\partial d}
=t(1+t)\bigl(a^2A(t)+aB(t)+C(t)\bigr).
\end{equation}
Here $a=d-2$, and the three polynomials in \eqref{eq:first-sixth-parameter-derivative} are
\begin{equation}\label{eq:first-sixth-derivative-coefficients}\begin{aligned}
\frac{A(t)}3&=1-t(1-t)-t^3(1-t),\\[7pt]
\frac{B(t)}6&=4-4t(1-t)-9t^3(1-t),\\[7pt]
C(t)&=44+t(1-t)-239t^3(1-t).
\end{aligned}\end{equation}
The bounds $t(1-t)\le1/4$ and
$t^3(1-t)\le27/256$ show that $A,B,C$ in \eqref{eq:first-sixth-derivative-coefficients} are positive.  Thus $N_6$
increases with $d$ by \eqref{eq:first-sixth-parameter-derivative}.

Suppose first that $t\ge1/3$.  Then
\begin{equation}\label{eq:first-sixth-at-two}
N_6(d,t)\ge N_6(2,t)=3(1+t)f(t),
\end{equation}
where
\begin{equation}\label{eq:first-sixth-reduction}
f(t)=21\left(t-\frac5{21}-t^2g(t)\right),
\quad g(t)=(1-t)(11t^2-4).
\end{equation}
The positive maximum of $g$ occurs at a point $r>4/5$ satisfying
$33r^2=22r+4$.  Since $(1-t)(11t-4)$ decreases for $t\ge4/5$,
\begin{equation}\label{eq:first-sixth-maximum}
g(r)=\frac23(1-r)(11r-4)<\frac{16}{25}<\frac23.
\end{equation}
Combining \eqref{eq:first-sixth-reduction} and \eqref{eq:first-sixth-maximum} gives
\begin{equation}\label{eq:first-sixth-quadratic-bound}
\frac{f(t)}{21}\ge t-\frac5{21}-\frac23t^2.
\end{equation}
The polynomial on the right is concave, and its values at $1/3$ and
$1$ are $4/189$ and $2/21$.  Thus $f(t)>0$.

If $t\le1/3$, then $d\ge t^{-1}-1$. Monotonicity in $d$ and \eqref{eq:first-sixth-polynomial} give
\begin{equation}\label{eq:first-sixth-small-t}
N_6(d,t)\ge
\frac{(t-1)(t+1)^2(6t+1)}{t^2}
\bigl(32t^4-20t^3-7t^2+4t-1\bigr).
\end{equation}
The last factor in \eqref{eq:first-sixth-small-t} is negative: its first two terms equal
$4t^3(8t-5)\le0$, while $-7t^2+4t-1<0$.  The first factor is also
negative, and therefore $N_6\ge0$.
\end{proof}

\begin{lemma}\label{lem:second-cell}
For $-t_0\le x\le t_1$ and every integer $0\le n\le6$,
\begin{equation}\label{eq:second-interval-bound}
U_n(x)\ge U_2(x).
\end{equation}
\end{lemma}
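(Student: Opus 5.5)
We prove the claim degree by degree, in the style of Lemma~\ref{lem:first-cell}. For $0\le n\le6$ we write $D_n(x)=U_n(x)-U_2(x)$ and compute it explicitly from the recurrence \eqref{eq:ultraspherical-recurrence}. We then factor out the obvious zeros. The interval $[-t_0,t_1]$ is exactly the set where $Q_1(x)\ge0$ and $Q_2(x)\le0$. Indeed, $Q_1(x)=((d+1)x+1)/(d+2)$ up to normalisation, and by \eqref{eq:contiguous-difference} the zero set of $Q_2$ is where $U_2=U_3$.

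\textbf{Low degrees.} The cases $n=0,1,2,3$ are immediate from \eqref{eq:contiguous-difference} and \eqref{eq:interval-signs}. We have $U_0\ge U_1\ge U_2$ because $Q_0,Q_1\ge0$, and $U_3-U_2$ is a positive multiple of $-(1-x)Q_2(x)\ge0$.

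\textbf{Degrees $4$ and $5$.} For $n\ge4$ we use Lemma~\ref{lem:representation} with $k=2$:
\[
U_n-U_2=a_2\bigl(W^{(2,\lambda)}_{n-2}-1\bigr)Q_2-b_2\bigl(W^{(3,\lambda)}_{n-3}-1\bigr)Q_1 .
\]
So it suffices that $W^{(2,\lambda)}_{j}(x)\le1$ for $j\le4$ and $W^{(3,\lambda)}_j(x)\le1$ for $j\le3$ on $[-t_0,t_1]$. For $x\ge0$ we hope to get this from Corollary~\ref{cor:auxiliary-upper-bound}, because $K=c/(2\lambda)\le3$ and $\theta\le\pi/2\le 4\pi\lambda/c$ when $c\le 3$ and $\lambda\ge1/2$ (indeed $4\pi\lambda/3\ge 2\pi/3$). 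Thus the case $x\ge0$ should follow directly. For $x<0$ the corollary does not apply. We instead use the explicit low-degree formulas: $1-W_1=\text{(positive)}(1-x)$, and \eqref{eq:strict-second-auxiliary} gives $W_2\le1$ iff $2(c+\lambda)x+c+2\lambda\ge0$. The latter holds for $x\ge-t_0=-1/(2\lambda+2)$, since then $2(c+\lambda)x\ge -(c+\lambda)/(\lambda+1)\ge -(c+2\lambda)$. Degrees $3,4$ of $W$ at negative $x$ need a direct factorisation. We compute $1-W_j=(1-x)\,H_j(c,\lambda,x)/(\text{positive})$ and show $H_j>0$ for $-t_0\le x\le0$, treating $H_j$ as a polynomial in $d$ with coefficients in $x$, as in Appendix~\ref{app:exceptional}.

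If the $W$-bound fails somewhere on the negative side, we fall back on direct expansion. We write $D_4$ and $D_5$ as polynomials in $x$ and $d$, pull out the factor $(1-x)$ and, when $n$ is odd, the factor involving $Q_2$, and check positivity of the cofactor. To do so we substitute the endpoints $x=-t_0$ and $x=t_1$ (with $\rho=\sqrt{d+4}$) and use monotonicity in $d$, as in \eqref{eq:first-fourth-parameter-derivative}. The odd degree $5$ is the delicate one, since $D_5$ vanishes at neither endpoint. We split into $x\ge0$ and $x<0$; for $x<0$ we split further into $d\le3$ and $d>3$, which matches the references made in the proof of Proposition~\ref{prop:positive-minimisers}.

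\textbf{Degree $6$, the main obstacle.} We set $N(d,x)=d(d+2)(d+4)D_6(x)$, which is even in $x$ apart from the contribution of $U_2$. Since $U_6$ is even, we reduce to $|x|\le\max\{t_0,t_1\}$, where $U_2$ takes its values in $[-1/d,\,U_2(t_0)]$. We aim to show $U_6(x)\ge U_2(x)$ via $N$ as a polynomial in $y=x^2$ with $d$-dependent coefficients. First we prove monotonicity in $d$ after fixing $y$, as was done for $N_6$ in \eqref{eq:first-sixth-parameter-derivative}. Then we reduce to $d=2$, or to the boundary relation $y=\max\{t_0,t_1\}^2$ expressed through $d$, and finish by a convexity/endpoint check. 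The hardest part will be controlling the sign of $\partial N/\partial d$, since $t_1$ moves with $d$. If that fails, we will instead parametrise by $\rho$ and verify positivity of the resulting one-variable polynomials coefficientwise, as in \eqref{eq:second-moment-endpoint}.
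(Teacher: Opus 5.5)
\textbf{There is a genuine gap: degree $5$, and as planned degree $6$, on $[-t_0,0)$.}

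Your treatment of degrees $0$--$4$ is fine. For $0\le x\le t_1$, your use of Lemma~\ref{lem:representation} with $k=2$ and Corollary~\ref{cor:auxiliary-upper-bound} is a valid argument, different from the paper's: $c\le3$ and $\lambda\ge1/2$ give $4\pi\lambda/c>\pi/2$. It even yields $U_n\ge U_2$ on $[0,t_1]$ for every $n$.

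On $[-t_0,0)$, however, the sufficient condition $W\le1$ that your main route needs is false. For $\alpha=0$ the recurrence gives
\[
1-W_3^{(2,1/2)}(x)=\frac{7}{60}(1-x)(3x+1)(15x+4),\qquad 1-W_3^{(3,1/2)}(x)=\frac{3}{40}(1-x)(77x^2+44x+5).
\]
Hence $W_3^{(2,1/2)}>1$ on $(-1/3,-4/15)$; for example $W_3^{(2,1/2)}(-0.3)\approx1.0076$. Likewise $W_3^{(3,1/2)}>1$ on $[-1/3,-0.16]$. So the cofactors $H_j$ you plan to show positive are not positive. There, $U_5\ge U_2$ holds only because the $Q_1$-term in the representation outweighs the now negative $Q_2$-term. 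Separate bounds on the two $W$'s cannot capture this.

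\textbf{Degree $5$.} You are left with your fallback, which is only sketched and would not work as described. $D_5$ has no factor coming from $Q_2$; only $D_3$ does. After removing $x-1$, one is left with a quartic $H$, where $D_5=(x-1)H(x)/(d(d+2))$. Evaluating $H$ at $x=-t_0$ and $x=t_1$ and varying $d$ says nothing about its sign between the endpoints. Also, $D_5$ \emph{does} vanish at $x=-t_0$ when $d=2$ (the Legendre tie $P_5(-1/3)=P_2(-1/3)$), so the estimate must be sharp there.

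What is missing is control of the shape of $H$ in $x$:
\begin{itemize}
\item[(i)] For $2\le d\le3$, the paper shows that $J(u)=H(-u)$ is concave on $[0,1/(d+1)]$ with $J'(1/(d+1))>0$. Hence $J\le J(1/(d+1))=-d(d-2)(d-1)(d+2)(d+5)/(d+1)^4\le0$.
\item[(ii)] For $d\ge3$, it bounds $J$ term by term.
\end{itemize}

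\textbf{Degree $6$.} Your plan starts from a slip: $U_2$ is even, so $D_6$ is even in $x$. Since $D_6(\pm1)=0$, one has $D_6=(d+7)(x^2-1)L(x^2)/(d(d+2)(d+4))$. Here $L$ is a quadratic in $y=x^2$ with positive leading coefficient $(d+5)(d+9)$, hence convex. The proof then reduces to checking $L(0)=1-d<0$ and $L<0$ at $y=t_0^2$ and $y=t_1^2$. No monotonicity in $d$, and no control of $\partial N/\partial d$, is needed.
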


\begin{proof}
Put $\rho=\sqrt{d+4}>2$. By \eqref{eq:initial-endpoints},
$U_2(x)<0$ throughout the interval. The case $n=2$ is equality, and
the recurrence \eqref{eq:ultraspherical-recurrence} gives
\begin{equation}\label{eq:second-low-degree-differences}\begin{aligned}
U_3(x)-U_2(x)&=\frac{(x-1)((d+3)x^2+2x-1)}d,\\[7pt]
U_4(x)-U_2(x)&=\frac{(d+5)(x^2-1)((d+3)x^2-1)}
{d(d+2)},\\[7pt]
U_6(x)-U_2(x)&=\frac{(d+7)(x^2-1)L(x^2)}
{d(d+2)(d+4)},
\end{aligned}\end{equation}
where
\begin{equation}\label{eq:second-sixth-factor}
L(y)=(d+5)(d+9)y^2+(d+5)(d-6)y+1-d.
\end{equation}
The cases $n=0,1$ are immediate.  For $n=3$, the interval
$[-t_0,t_1]$ lies between the two roots $-1/(\rho-1)$ and
$1/(\rho+1)$ of the quadratic factor.  For $n=4$, we use
$|x|\le1/\sqrt{d+3}$.  Finally,
$0\le x^2\le\max\{t_0^2,t_1^2\}$ for $n=6$.  The polynomial $L$ is
convex, $L(0)<0$, and
\begin{equation}\label{eq:second-sixth-endpoints}\begin{aligned}
L(t_0^2)
&=-\frac{(d+2)(d^4+d^2+26d-8)}{(d+1)^4}<0,\\[7pt]
L(t_1^2)
&=-\frac{2\rho^2(\rho^3+4\rho^2+\rho-8)}
{(\rho+1)^4}<0.
\end{aligned}\end{equation}
Thus $L(x^2)<0$ throughout the interval, and \eqref{eq:second-low-degree-differences} proves the case $n=6$.

It remains only to consider $n=5$.  The recurrence \eqref{eq:ultraspherical-recurrence} gives
\begin{equation}\label{eq:second-fifth-factor}
U_5(x)-U_2(x)=\frac{(x-1)H(x)}{d(d+2)},
\end{equation}
where
\begin{equation}\label{eq:second-fifth-polynomial}\begin{aligned}
H(x)={}&(d+5)(d+7)(x^4+x^3)
+(d+5)(d-3)x^2\\[7pt]
&-(d+17)x-(d+2).
\end{aligned}\end{equation}
For $x\ge0$, differentiating \eqref{eq:second-fifth-polynomial} gives
$H'''(x)=6(d+5)(d+7)(4x+1)>0$ and $H'(0)<0$.
Thus $H$ first
decreases and then, at most, increases, so it is largest at an endpoint
of $[0,t_1]$.
Both endpoint values are negative.
\begin{equation}\label{eq:second-fifth-endpoints}
H(0)=-d-2,\quad H(t_1)=
-\frac{2(\rho+2)(\rho^2+3)(\rho^2+3\rho+1)}
{(\rho+1)^4}.
\end{equation}

For $x=-u\le0$, put $J(u)=H(-u)$.  If $d\ge3$, then
\begin{equation}\label{eq:second-fifth-negative-large-d}\begin{aligned}
J(u)&\le\frac{(d+5)(d-3)}{(d+1)^2}
+\frac{d+17}{d+1}-(d+2)\\[7pt]
&=-\frac{d(d-3)(d+5)}{(d+1)^2}\le0.
\end{aligned}\end{equation}
Suppose that $2\le d\le3$.  Since $0\le u\le1/(d+1)\le1/3$,
\begin{equation}\label{eq:second-fifth-concavity}
J''(u)=6(d+5)(d+7)u(2u-1)+2(d+5)(d-3)\le0.
\end{equation}
Putting $a=d-2$, we have
\begin{equation}\label{eq:second-fifth-endpoint-derivative}
J'\left(\frac1{d+1}\right)
=\frac{3a^4+49a^3+221a^2+295a+72}{(d+1)^3}>0.
\end{equation}
By \eqref{eq:second-fifth-concavity} and \eqref{eq:second-fifth-endpoint-derivative}, $J$ increases, and
\begin{equation}\label{eq:second-fifth-negative-endpoint}
J(u)\le J\left(\frac1{d+1}\right)
=-\frac{d(d-2)(d-1)(d+2)(d+5)}{(d+1)^4}\le0.
\end{equation}
Hence $H(x)\le0$ on the whole interval, and \eqref{eq:second-fifth-factor} gives $U_5(x)\ge U_2(x)$.
\end{proof}

\section*{Acknowledgements}

The discrete energy used in the proof was inspired by the
Pr\"ufer-amplitude comparison in \cite{Castillo2026Szego}. The authors thank S. Yakubovich for a comment that motivated the proof of Proposition 6.3. 
The first author thanks the Institute of Mathematics and the Ministry of Science and Education of Azerbaijan for their hospitality, which made the writing of this paper possible. The first author was supported by the Portuguese Foundation for Science
and Technology through CMUC, project UID/00324/2025, and through project
2022.00143.\allowbreak CEECIND/\allowbreak CP1714/\allowbreak CT0002.
The second author is supported by the Azerbaijan Science Foundation
under grant no.~AEF-MGC-2024-2(50)-16/02/1-M-02.

\makeatletter
\renewcommand{\@biblabel}[1]{#1.}
\makeatother

\end{document}